\let\OLDthebibliography\thebibliography
\renewcommand\thebibliography[1]{
  \OLDthebibliography{#1}
  \setlength{\parskip}{3pt}
  \setlength{\itemsep}{0pt plus 0.3ex}
}
\def\numberlikeadb{\global\def\theequation{\thesection.\arabic{equation}}}
\newtheorem{theorem}{Theorem}[section]
\newtheorem{lemma}[theorem]{Lemma}
\newtheorem{open}[theorem]{Open Problem}
\newtheorem{corollary}[theorem]{Corollary}
\newtheorem{proposition}[theorem]{Proposition}
\newtheorem{remark}[theorem]{Remark}
\begin{document}

\title{Inequalities for integrals of modified Bessel functions and expressions involving them}
\author{Robert E. Gaunt\footnote{School of Mathematics, The University of Manchester, Manchester M13 9PL, UK}}

\date{\today} 
\maketitle

\vspace{-5mm}

\begin{abstract}Simple inequalities are established for some integrals involving the  modified Bessel functions of the first and second kind.  In most cases, we show that we obtain the best possible constant or that our bounds are tight in certain limits.  We apply these inequalities to obtain uniform bounds for several expressions involving integrals of modified Bessel functions.  Such expressions occur in Stein's method for variance-gamma approximation, and the results obtained in this paper allow for technical advances in the method.  We also present some open problems that arise from this research.  
\end{abstract}

\noindent{{\bf{Keywords:}}} Modified Bessel function; inequality; integral

\noindent{{{\bf{AMS 2010 Subject Classification:}}} Primary 33C10; 26D15

\section{Introduction}\label{intro}

\subsection{Motivation through Stein's method for variance-gamma approximation}

Stein's method \cite{stein} is a powerful technique in probability theory for deriving bounds for distributional approximations with respect to a probability metric, with applications in areas as diverse as random graph theory \cite{bhj92}, number theory \cite{harper} and random matrix theory \cite{fg15}.  The method is particularly well developed for normal approximation (see the books \cite{chen, np12}), and there is active research into extensions to non-normal limits; see the survey \cite{ross}.

Recently, Stein's method has been extended to variance-gamma (VG) approximation \cite{eichelsbacher, gaunt vg}.  The VG distribution (also known as the generalized Laplace distribution \cite{kkp01}) is commonly used in financial mathematics \cite{madan}, and has recently appeared in several papers in the probability literature as a limiting distribution \cite{aaps17, azmooden, bt17}.  This is in part due to the fact that the family of VG distributions is a rich one, with special or limiting cases that include, amongst others, the normal, gamma, Laplace, product of zero mean normals and difference of gammas \cite{gaunt vg, kkp01}.  It is therefore of interest to develop Stein's method for VG approximation to put some of the existing literature on Stein's method into a more general framework, and, more importantly, to extend it to new limit distributions.

At the heart of Stein's method for VG approximation is the function $f_h:\mathbb{R}\rightarrow\mathbb{R}$ defined by
\begin{align*}f_h(x)&=-\frac{\mathrm{e}^{-\beta x} K_{\nu}(|x|)}{|x|^{\nu}} \int_0^x \mathrm{e}^{\beta t} |t|^{\nu} I_{\nu}(|t|) h(t) \,\mathrm{d}t  -\frac{\mathrm{e}^{-\beta x} I_{\nu}(|x|)}{|x|^{\nu}} \int_x^{\infty} \mathrm{e}^{\beta t} |t|^{\nu} K_{\nu}(|t|)h(t)\,\mathrm{d}t, 
\end{align*}
where $\nu>-\frac{1}{2}$, $-1<\beta<1$, and $h:\mathbb{R}\rightarrow\mathbb{R}$ is smooth and such that $\mu(h)=0$, for $\mu$ the VG probability measure.  A crucial part of the method is to obtain uniform bounds, in terms of the supremum norms of $h$ and its derivatives,  for $f_h(x)$ and its first four derivatives.  In order to obtain these bounds, new inequalities were derived for integrals of modified Bessel functions \cite{gaunt ineq1, gaunt ineq2}, which were then used in the papers \cite{gaunt vg, dgv15} to bound derivatives of all order. 

To obtain distributional approximations in stronger probability metrics (such as the Kolmogorov and Wasserstein metrics), alternative bounds for $f_h$ and its derivatives are required, which have a different dependence on the function $h$.  This is the focus of \cite{gaunt vg2, gaunt vg3}, and to achieve such bounds, new inequalities are required for certain expressions involving integrals of modified Bessel functions.  In this paper, we establish uniform bounds for some of these terms. In particular, we shall focus on bounding expressions of the type
\begin{align}\label{mlhwsb1} \frac{\mathrm{e}^{-\beta x}K_{\nu+1}(x)}{x^{\nu}}\int_0^{x} \mathrm{e}^{\beta t}t^{\nu+1} I_{\nu}(t)\,\mathrm{d}t,& \qquad \frac{\mathrm{e}^{-\beta x}I_{\nu+1}(x)}{x^{\nu}} \int_x^{\infty} \mathrm{e}^{\beta t}t^{\nu+1} K_{\nu}(t)\,\mathrm{d}t, \\
\label{mlhwsb2} \frac{\mathrm{e}^{-\beta x}K_{\nu+1}(x)}{x^{\nu-1}}\int_0^{x} \mathrm{e}^{\beta t}t^{\nu} I_{\nu}(t)\,\mathrm{d}t,& \qquad \frac{\mathrm{e}^{-\beta x}I_{\nu+1}(x)}{x^{\nu-1}} \int_x^{\infty} \mathrm{e}^{\beta t}t^{\nu} K_{\nu}(t)\,\mathrm{d}t. 
\end{align}
In \cite{gaunt vg2, gaunt vg3}, these bounds are used in the development of a framework for deriving Kolmogorov and Wasserstein distance error bounds for VG approximation via Stein's method.  The case $\beta=0$ is dealt with in \cite{gaunt vg2} and the case $\beta\not=0$ will be dealt with in \cite{gaunt vg3}.  In \cite{gaunt vg2}, this framework is applied to obtain explicit bounds for VG approximation in a variety of settings, including quantitative six moment theorems for the VG approximation of double Wiener-It\^{o} integrals (see \cite{eichelsbacher} for related results); VG approximation for a special special case of the $D_2$ statistic for alignment-free sequence comparison \cite{bla, lippert}; and Laplace approximation of a random sum of independent mean zero random variables (see \cite{pike} for related results).  Further applications will be given in the companion paper \cite{gaunt vg3}.

\subsection{Summary of the paper}

The approach we shall take to bounding these expressions is to first bound the integrals in (\ref{mlhwsb1}) and (\ref{mlhwsb2}).  Closed form expressions for these integrals, in terms of modified Bessel functions and the modified Struve function $\mathbf{L}_{\nu}(x)$, do in fact exist if $\beta=0$.  In this case, the integrals in (\ref{mlhwsb1}) take a very simple form (see (\ref{diffone}) and (\ref{diffKi})).  For $x>0$ and $\nu>-\frac{1}{2}$, let $\mathscr{L}_{\nu}(x)$ denote $I_{\nu}(x)$, $ \mathrm{e}^{\nu \pi i}K_{\nu}(x)$ or any linear combination of these functions, in which the coefficients are independent of $\nu$ and $x$.  From formula 10.43.2 of \cite{olver},
\begin{equation}\label{besint6}\int x^{\nu}\mathscr{L}_{\nu}(x)\,\mathrm{d}x =\sqrt{\pi}2^{\nu-1}\Gamma(\nu+\tfrac{1}{2})x\big(\mathscr{L}_{\nu}(x)\mathbf{L}_{\nu-1}(x)-\mathscr{L}_{\nu-1}(x)\mathbf{L}_{\nu}(x)\big). 
\end{equation}
There are no closed form expressions in terms of modified Bessel and Struve functions in the literature for the integrals in (\ref{mlhwsb1}) and (\ref{mlhwsb2}) for the case $\beta\not=0$.  Moreover, even when $\beta=0$ the expression on the right-hand side of (\ref{besint6}) is a complicated expression involving the modified Struve function $\mathbf{L}_{\nu}(x)$.  This provides the motivation for establishing simple bounds, in terms of modified Bessel functions, for the integrals given in (\ref{mlhwsb1}) and (\ref{mlhwsb2}).

In a recent work, \cite{gaunt ineq1} obtained simple inequalities involving modified Bessel functions for the integrals of (\ref{mlhwsb2}), which were used in \cite{gaunt ineq2} to bound a number of expressions that arise in Stein's method for VG approximation.  In Section \ref{sec2} of this paper, we obtain similar such bounds that will allow us to bound the expressions in (\ref{mlhwsb2}), and also obtain improvements on the inequalities of \cite{gaunt ineq1}.  Indeed, many of our bounds (see Theorems \ref{tiger}, \ref{tiger1} and Remark \ref{rrrr}) have the best possible constants or are tight in a certain limit.  We shall also obtain inequalities for the integrals of (\ref{mlhwsb1}), which, to best knowledge of this author, have not previously been studied.  The integral inequalities obtained in this paper shall have an immediate application to Stein's method for VG approximation.  The bounds may also prove to be useful in other problems involving modified Bessel functions; see for example, \cite{baricz3} in which inequalities for modified Bessel functions of the first kind were used to obtain lower and upper bounds for integrals involving modified Bessel functions of the first kind.

In Section \ref{sec3}, the integral inequalities that are derived in Section \ref{sec2} are applied, together with known inequalities for products of modified Bessel functions, to obtain uniform bounds for the expressions in (\ref{mlhwsb1}) and (\ref{mlhwsb2}).  We are able to establish these bounds for the whole parameter range $\nu>-\frac{1}{2}$ and $-1<\beta<1$, except for the first expression of (\ref{mlhwsb2}).  Straightforward calculations using the limiting forms of Section \ref{asymsec} confirm that the expression is bounded for all $x>0$ in the whole parameter range; however, deriving an explicit upper bound in terms of $\nu$ and $\beta$ becomes difficult if $\nu<\frac{1}{2}$ and $\beta<0$.  We make some partial progress (see Theorem \ref{openthm}), but we leave this as an open problem (see Open Problems \ref{openprob1}, \ref{openprob2} and \ref{openprob3}).  In the Appendix, we state some elementary properties of modified Bessel functions that are used throughout this paper.




\section{Inequalities for integrals involving modified Bessel functions}\label{sec2}

Our first proposition contains some results that are easy consequences of some of the inequalities of Theorems 2.1 and 2.5 of \cite{gaunt ineq1}.  As shall be the case with the following theorems of this section, the inequalities will be needed in Section \ref{sec3}.

\begin{proposition}\label{propone}Let $\beta\geq0$.  Then, for $x>0$,
\begin{eqnarray}\label{propb2a1}\int_0^x \mathrm{e}^{\beta t}t^{\nu}I_\nu(t)\,\mathrm{d}t&<& \frac{2(\nu+1)}{2\nu+1}\mathrm{e}^{\beta x}x^{\nu}I_{\nu+1}(x), \quad \nu>-\tfrac{1}{2}, \\
\label{propb2a}\int_0^x \mathrm{e}^{\beta t}t^{\nu+1}I_\nu(t)\,\mathrm{d}t&\leq& \mathrm{e}^{\beta x}x^{\nu+1}I_{\nu+1}(x),\quad \nu>-1, 
\end{eqnarray}
\begin{eqnarray}
\label{fff1}\int_x^\infty \mathrm{e}^{-\beta t}t^{\nu}K_{\nu}(t)\,\mathrm{d}t&<& \mathrm{e}^{-\beta x}x^{\nu}K_{\nu+1}(x), \quad\nu\in\mathbb{R}, \\
\label{fff}\int_x^\infty \mathrm{e}^{-\beta t}t^{\nu+1}K_{\nu}(t)\,\mathrm{d}t&\leq& \mathrm{e}^{-\beta x}x^{\nu+1}K_{\nu+1}(x), \quad \nu\in\mathbb{R}.
\end{eqnarray}
We have equality in (\ref{propb2a}) and (\ref{fff}) if and only if $\beta=0$.
\end{proposition}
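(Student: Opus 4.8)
The plan is to reduce each of the four bounds to its already-known $\beta=0$ version and to absorb the exponential weight by a single monotonicity estimate. The key observation is that for $\beta\geq0$ one has $\mathrm{e}^{\beta t}\leq \mathrm{e}^{\beta x}$ on $0\leq t\leq x$ and $\mathrm{e}^{-\beta t}\leq \mathrm{e}^{-\beta x}$ on $t\geq x$; since each integrand $t^{\nu}I_{\nu}(t)$, $t^{\nu+1}I_{\nu}(t)$, $t^{\nu}K_{\nu}(t)$, $t^{\nu+1}K_{\nu}(t)$ is strictly positive for $t>0$ on the stated parameter ranges (recall $I_{\nu}(t)>0$ for $\nu>-1$ and $K_{\nu}(t)>0$ for all real $\nu$), replacing $\mathrm{e}^{\beta t}$ by $\mathrm{e}^{\beta x}$ (respectively $\mathrm{e}^{-\beta t}$ by $\mathrm{e}^{-\beta x}$) can only increase the integral. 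Concretely I would record the estimates
\begin{equation*}
\int_0^x \mathrm{e}^{\beta t}t^{\nu}I_\nu(t)\,\mathrm{d}t\leq \mathrm{e}^{\beta x}\int_0^x t^{\nu}I_\nu(t)\,\mathrm{d}t
\quad\text{and}\quad
\int_x^\infty \mathrm{e}^{-\beta t}t^{\nu}K_\nu(t)\,\mathrm{d}t\leq \mathrm{e}^{-\beta x}\int_x^\infty t^{\nu}K_\nu(t)\,\mathrm{d}t,
\end{equation*}
together with their counterparts in which $t^{\nu}$ is replaced by $t^{\nu+1}$, and then apply the $\beta=0$ facts to the right-hand sides.

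For (\ref{propb2a1}) and (\ref{fff1}) it then suffices to invoke the $\beta=0$ inequalities $\int_0^x t^{\nu}I_\nu(t)\,\mathrm{d}t<\frac{2(\nu+1)}{2\nu+1}x^{\nu}I_{\nu+1}(x)$ for $\nu>-\frac12$ and $\int_x^\infty t^{\nu}K_\nu(t)\,\mathrm{d}t< x^{\nu}K_{\nu+1}(x)$ for $\nu\in\mathbb{R}$, which are exactly the bounds supplied by Theorems 2.1 and 2.5 of \cite{gaunt ineq1}. Chaining the monotonicity estimate with these strict inequalities yields (\ref{propb2a1}) and (\ref{fff1}) for every $\beta\geq0$, the strictness being inherited from the $\beta=0$ step.

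For (\ref{propb2a}) and (\ref{fff}) the corresponding $\beta=0$ statements are in fact exact identities, which I would obtain from the differentiation formulas $\frac{\mathrm{d}}{\mathrm{d}t}\big(t^{\nu+1}I_{\nu+1}(t)\big)=t^{\nu+1}I_{\nu}(t)$ and $\frac{\mathrm{d}}{\mathrm{d}t}\big(t^{\nu+1}K_{\nu+1}(t)\big)=-t^{\nu+1}K_{\nu}(t)$. Integrating the first from $0$ to $x$ and using $t^{\nu+1}I_{\nu+1}(t)\to0$ as $t\to0^{+}$ (valid precisely because $\nu>-1$) gives $\int_0^x t^{\nu+1}I_{\nu}(t)\,\mathrm{d}t=x^{\nu+1}I_{\nu+1}(x)$; integrating the second from $x$ to $\infty$ and using the exponential decay $t^{\nu+1}K_{\nu+1}(t)\to0$ as $t\to\infty$ gives $\int_x^\infty t^{\nu+1}K_{\nu}(t)\,\mathrm{d}t=x^{\nu+1}K_{\nu+1}(x)$. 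Substituting these identities into the monotonicity estimates of the first paragraph produces (\ref{propb2a}) and (\ref{fff}).

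The only point requiring genuine care is the equality assertion, and it is where I would focus the write-up. Because the $\beta=0$ input for (\ref{propb2a}) and (\ref{fff}) is an \emph{equality}, the sole source of inequality there is the exponential estimate. If $\beta=0$ the factors $\mathrm{e}^{\beta t}$ and $\mathrm{e}^{\beta x}$ are identically $1$, so equality holds; if $\beta>0$, then $\mathrm{e}^{\beta t}<\mathrm{e}^{\beta x}$ strictly on the open interval $(0,x)$ (respectively $\mathrm{e}^{-\beta t}<\mathrm{e}^{-\beta x}$ on $(x,\infty)$), and strict positivity of the integrand there forces the monotonicity estimate, and hence (\ref{propb2a}) and (\ref{fff}), to be strict. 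This gives equality if and only if $\beta=0$. I do not expect a substantive obstacle: the argument is elementary once the $\beta=0$ facts are quoted, and the main things to verify carefully are the strictness bookkeeping in the equality case and the boundary/convergence behaviour that makes the two exact identities valid on the ranges $\nu>-1$ and $\nu\in\mathbb{R}$ respectively.
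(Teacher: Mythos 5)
Your proposal is correct and follows essentially the same route as the paper: pull the exponential factor out of each integral using the monotonicity of $\mathrm{e}^{\beta t}$ together with positivity of the integrand, then invoke the $\beta=0$ inequalities from Theorems 2.1 and 2.5 of \cite{gaunt ineq1} for (\ref{propb2a1}) and (\ref{fff1}), and evaluate the remaining integrals exactly via (\ref{diffone}) and (\ref{diffKi}) for (\ref{propb2a}) and (\ref{fff}). Your explicit bookkeeping of where strictness enters in the equality case is slightly more careful than the paper's, but the argument is the same.
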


\begin{proof}Since $\beta\geq0$, the function $\mathrm{e}^{\beta t}$ is non-decreasing in $t$, and therefore $\int_0^x \mathrm{e}^{\beta t}t^{\nu}I_\nu(t)\,\mathrm{d}t\leq \mathrm{e}^{\beta x}\int_0^x t^{\nu}I_\nu(t)\,\mathrm{d}t$, where we used that $I_\nu(x)>0$ for all $x>0$ if $\nu>-1$.  We can use the strict inequality $\int_0^x t^{\nu}I_\nu(t)\,\mathrm{d}t< \frac{2(\nu+1)}{2\nu+1}x^{\nu}I_{\nu+1}(x)$, which is valid for $\nu>-\frac{1}{2}$, from Theorem 2.1 of \cite{gaunt ineq1} to obtain  (\ref{propb2a1}).  The proof of inequality (\ref{propb2a}) is similar, but the integral can now be evaluated using (\ref{diffone}).  Note that the integral exists if $\nu>-1$.  Inequality (\ref{fff1}) follows from using the inequality $\int_x^\infty \mathrm{e}^{-\beta t}t^{\nu}K_{\nu}(t)\,\mathrm{d}t<\mathrm{e}^{-\beta x}\int_x^\infty t^{\nu}K_{\nu}(t)\,\mathrm{d}t$ and then bounding the integral using the inequality $\int_x^\infty t^{\nu}K_{\nu}(t)\,\mathrm{d}t<x^{\nu}K_{\nu+1}(x)$ of Theorem 2.5 of \cite{gaunt ineq1}.  Finally, the proof of (\ref{fff}) is a similar, but the integral can now be evaluated using (\ref{diffKi}).
\end{proof}

The integral inequalities obtained in the remainder of this section will require more careful arguments, because, in most cases, we will no longer be able to remove the exponentials $\mathrm{e}^{\beta t}$ and $\mathrm{e}^{-\beta t}$ from the integrals.  Before stating the following theorem, it will be useful to introduce some notation.  Let $\nu>-\frac{1}{2}$ and $-1<\beta<1$.  Then we define
\begin{equation*}I_{\nu,\beta}:=\int_0^\infty \mathrm{e}^{\beta t}t^\nu K_\nu(t)\,\mathrm{d}t.
\end{equation*}
An explicit formula for the integral, involving the Ferrers function $P$, is given in formula 10.43.22 of \cite{olver}.  However, for our purposes, it is more useful to note the double inequality
\begin{equation}\label{doubleivb}\frac{\sqrt{\pi}\Gamma(\nu+\frac{1}{2})2^{\nu-1}}{(1-\beta^2)^{\nu+\frac{1}{2}}}\leq I_{\nu,\beta}<\frac{\sqrt{\pi}\Gamma(\nu+\frac{1}{2})2^{\nu}}{(1-\beta^2)^{\nu+\frac{1}{2}}},
\end{equation}
where we have equality in the lower bound if and only if $\beta=0$.  These inequalities are an easy consequence of (\ref{pdfk}).

With this notation, we state our theorem.  Parts (i) and (ii) are an improvement on some inequalities from Theorem 2.5 of \cite{gaunt ineq1}.

\begin{theorem}\label{tiger}Let $0<\beta<1$.  Then

(i) For all $x>0$,
\begin{equation}\label{lowerk}\int_x^\infty \mathrm{e}^{\beta t}t^\nu K_\nu(t)\,\mathrm{d}t\leq\frac{1}{1-\beta}\mathrm{e}^{\beta x}x^\nu K_{\nu}(x), \quad \nu\leq\tfrac{1}{2}.
\end{equation}
We have equality in (\ref{lowerk}) if and only if $\nu=\frac{1}{2}$, and the inequality is reversed if $\nu>\frac{1}{2}$.

(ii) Let $\nu>\frac{1}{2}$.  Then the following double inequality holds for all $x>0$,
\begin{equation}\label{lowerk2}\frac{1}{1-\beta}\mathrm{e}^{\beta x}x^\nu K_{\nu}(x)<\int_x^\infty \mathrm{e}^{\beta t}t^\nu K_\nu(t)\,\mathrm{d}t<\frac{I_{\nu,\beta}}{2^{\nu-1}\Gamma(\nu)}\mathrm{e}^{\beta x}x^\nu K_{\nu}(x).
\end{equation}
The constants in the upper and lower bounds of (\ref{lowerk2}) cannot be improved.

(iii) Now let $\nu\geq-\frac{1}{2}$.  Then the following double inequality holds for all $x>0$,
\begin{equation}\label{lowerk3}\frac{1}{1-\beta}\mathrm{e}^{\beta x}x^{\nu+1} K_{\nu+1}(x)\leq\int_x^\infty \mathrm{e}^{\beta t}t^{\nu+1} K_\nu(t)\,\mathrm{d}t<\bigg(1+\frac{\beta I_{\nu+1,\beta}}{2^\nu \Gamma(\nu+1)}\bigg)\mathrm{e}^{\beta x}x^{\nu+1} K_{\nu+1}(x).
\end{equation}
We have equality in the lower bound if and only if $\nu=-\frac{1}{2}$.  The constants in the upper and lower bounds of (\ref{lowerk3}) cannot be improved.  
\end{theorem}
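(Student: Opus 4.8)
The plan is to treat all three parts through one differential framework. Write $F(x)=\int_x^\infty \mathrm{e}^{\beta t}t^\nu K_\nu(t)\,\mathrm{d}t$ and $H(x)=\mathrm{e}^{\beta x}x^\nu K_\nu(x)$, so that $F'(x)=-H(x)$ and $F,H\to 0$ as $x\to\infty$. For the lower bounds (the constant $\frac{1}{1-\beta}$) I would compare $F$ with $G:=\frac{1}{1-\beta}H$. Using the standard identity $\frac{\mathrm{d}}{\mathrm{d}x}(x^\nu K_\nu(x))=-x^\nu K_{\nu-1}(x)$, a short computation gives
\[
F'(x)-G'(x)=\frac{\mathrm{e}^{\beta x}x^\nu}{1-\beta}\big(K_{\nu-1}(x)-K_\nu(x)\big).
\]
Since $K_\mu(x)$ is strictly increasing in $|\mu|$ (immediate from $K_\mu(x)=\int_0^\infty \mathrm{e}^{-x\cosh t}\cosh(\mu t)\,\mathrm{d}t$), the bracket has the sign of $|\nu-1|-|\nu|$, i.e.\ it is positive, zero or negative according as $\nu<\tfrac12$, $\nu=\tfrac12$ or $\nu>\tfrac12$. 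Integrating $F'-G'$ from $x$ to $\infty$ and using $F(\infty)=G(\infty)=0$ then yields $F\leq G$ for $\nu\leq\tfrac12$, with equality exactly at $\nu=\tfrac12$ and reversal for $\nu>\tfrac12$; this proves part (i) and the lower bound of (ii). Part (iii) is identical after replacing $\nu$ by $\nu+1$ in the power and using $\frac{\mathrm{d}}{\mathrm{d}x}(x^{\nu+1}K_{\nu+1}(x))=-x^{\nu+1}K_\nu(x)$; the analogous difference is proportional to $\beta\big(K_\nu(x)-K_{\nu+1}(x)\big)$, whose sign is governed by whether $\nu>-\tfrac12$, giving the lower bound of (iii) with equality iff $\nu=-\tfrac12$.

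For the upper bounds I would study the ratio $R(x):=F(x)/H(x)$ and locate its endpoint values. The limiting form $x^\nu K_\nu(x)\to 2^{\nu-1}\Gamma(\nu)$ as $x\to0^+$ (valid for $\nu>0$) gives $R(0^+)=I_{\nu,\beta}/(2^{\nu-1}\Gamma(\nu))$, precisely the upper constant in (ii), while the large-$x$ asymptotics give $R(\infty)=\frac{1}{1-\beta}$. For (iii) the corresponding constant is identified by integration by parts: since $t^{\nu+1}K_\nu(t)=-\frac{\mathrm{d}}{\mathrm{d}t}(t^{\nu+1}K_{\nu+1}(t))$,
\[
\int_0^\infty \mathrm{e}^{\beta t}t^{\nu+1}K_\nu(t)\,\mathrm{d}t=2^\nu\Gamma(\nu+1)+\beta I_{\nu+1,\beta},
\]
so that the analogue of $R(0^+)$ equals $1+\frac{\beta I_{\nu+1,\beta}}{2^\nu\Gamma(\nu+1)}$, which is exactly the upper constant $d$ in (iii).

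To turn these endpoint values into the claimed strict bounds, set $Q(x):=cH(x)-F(x)$ with $c=R(0^+)$ (and analogously $\widetilde Q=d\widetilde H-\widetilde F$ for (iii), where $\widetilde H=\mathrm{e}^{\beta x}x^{\nu+1}K_{\nu+1}(x)$). Then $Q(0^+)=Q(\infty)=0$, and a direct computation using $H'/H=\beta-K_{\nu-1}/K_\nu$ gives
\[
Q'(x)=H(x)\big(1+c\beta-c\,w(x)\big),\qquad w:=\frac{K_{\nu-1}}{K_\nu}.
\]
The crucial input is that $w(x)=K_{\nu-1}(x)/K_\nu(x)$ is strictly increasing in $x$ (the known monotonicity of the ratio $K_\mu/K_{\mu+1}$), so the bracket is strictly decreasing; since $Q'(0^+)>0$ while $Q$ vanishes at both endpoints, the bracket must change sign exactly once, forcing $Q$ to increase and then decrease and hence to be strictly positive on $(0,\infty)$. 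This gives $F<cH$, the upper bound of (ii); the same argument with the decreasing factor $d\beta-(d-1)K_\nu(x)/K_{\nu+1}(x)$ (again using that $K_\nu/K_{\nu+1}$ is increasing and $d>1$) gives the upper bound of (iii).

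Finally, sharpness is read off from the endpoints: the lower constant equals $R(\infty)$ and the upper constants equal $R(0^+)$ and $\widetilde R(0^+)$, so each is attained in a limit and cannot be improved. The degenerate orders are handled by the explicit formula $K_{1/2}(x)=\sqrt{\pi/(2x)}\,\mathrm{e}^{-x}$: at $\nu=\tfrac12$ (part (i)) and $\nu=-\tfrac12$ (part (iii)) the integrals evaluate in closed form and give exact equality with constant $\frac{1}{1-\beta}$; in particular at $\nu=-\tfrac12$ one computes $d=\frac{1}{1-\beta}$, so the strict upper bound of (iii) is to be read for $\nu>-\tfrac12$. I expect the main obstacle to be the upper bounds, namely correctly invoking the monotonicity of the Bessel-function ratio $K_\mu/K_{\mu+1}$ and combining it with the vanishing of $Q$ at both endpoints to deduce unimodality, rather than any of the routine differentiations for the lower bounds.
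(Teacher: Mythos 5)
Your proposal is correct and follows essentially the same route as the paper: the lower bounds in (\ref{lowerk}), (\ref{lowerk2}) and (\ref{lowerk3}) come from integration by parts plus the monotonicity of $K_\mu(x)$ in $|\mu|$ (your differential comparison of $F$ with $\frac{1}{1-\beta}H$ is the same computation in disguise), and the upper bounds come from showing that $Q$ vanishes at $0$ and at $\infty$ while $Q'$ changes sign exactly once, which is precisely the paper's argument with its function $v$. The only substantive substitutions are that you obtain the single sign change of $Q'$ from the strict monotonicity of $x\mapsto K_{\nu-1}(x)/K_\nu(x)$, where the paper cites the equivalent fact (Corollary 2.3 of \cite{gaunt ineq1}) that $K_\nu(x)=\alpha K_{\nu-1}(x)$ has exactly one positive root when $\alpha>1$ and $\nu>\frac{1}{2}$, and that you prove the upper bound of (\ref{lowerk3}) by re-running the unimodality argument, whereas the paper simply integrates by parts and applies the upper bound of (\ref{lowerk2}) to $\int_x^\infty \mathrm{e}^{\beta t}t^{\nu+1}K_{\nu+1}(t)\,\mathrm{d}t$; your observation that at $\nu=-\frac{1}{2}$ the upper constant in (\ref{lowerk3}) degenerates to $\frac{1}{1-\beta}$ and the strict inequality should be read for $\nu>-\frac{1}{2}$ is a correct and worthwhile refinement of the statement.
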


\begin{proof}(i) Suppose that $\nu< \frac{1}{2}$.  On using integration by parts and the differentiation formula (\ref{diffKi}), we obtain
\begin{equation*}\int_x^{\infty}\mathrm{e}^{\beta t}t^{\nu}K_{\nu}(t)\,\mathrm{d}t =-\frac{1}{\beta}\mathrm{e}^{\beta x}x^{\nu}K_{\nu}(x) +\frac{1}{\beta}\int_x^{\infty}\mathrm{e}^{\beta t} t^{\nu}K_{\nu-1}(t)\,\mathrm{d}t.
\end{equation*}
Applying the inequality (\ref{Kmoni}) and rearranging gives
\[\left(\frac{1}{\beta}-1\right)\int_x^{\infty}\mathrm{e}^{\beta t}t^{\nu}K_{\nu}(t)\,\mathrm{d}t< \frac{1}{\beta}\mathrm{e}^{\beta x}x^{\nu}K_{\nu}(x).\]
Inequality (\ref{lowerk}) now follows on rearranging.  If $\nu=\frac{1}{2}$, then we have equality because $K_{-\frac{1}{2}}(x)=K_{\frac{1}{2}}(x)$.  Finally, if $\nu>\frac{1}{2}$, then applying inequality (\ref{cake}) instead of (\ref{Kmoni}) reverses inequality (\ref{lowerk}).

(ii) The lower bound was established in part (i).  The upper bound represents an improvement in the constant of the final bound of Theorem 2.5 of \cite{gaunt ineq1}.  Our proof uses the same approach as \cite{gaunt ineq1} but involves a more careful analysis to ensure we obtain the best possible constant.  Define the function
\begin{equation*}v(x)=M_{\nu,\beta}\mathrm{e}^{\beta x}x^\nu K_{\nu}(x)-\int_x^\infty \mathrm{e}^{\beta t}t^\nu K_\nu(t)\,\mathrm{d}t,
\end{equation*} 
where $M_{\nu,\beta}=\frac{I_{\nu,\beta}}{2^{\nu-1}\Gamma(\nu)}$. Then proving that $v(x)>0$ for all $x>0$ establishes (\ref{lowerk2}).  We begin by noting that $v(0)=0$ and $\lim\nolimits_{x\rightarrow \infty}v(x)=0$, which are verified by the following calculations, where we make use of the limiting forms (\ref{Ktend0}) and (\ref{Ktendinfinity}):
\begin{align*}v(0)&=\frac{I_{\nu,\beta}}{2^{\nu-1}\Gamma(\nu)}\lim_{x\downarrow0}\mathrm{e}^{\beta x}x^\nu K_{\nu}(x)-\int_0^{\infty}  \mathrm{e}^{\beta t}t^{\nu} K_{\nu}(t)\,\mathrm{d}t \\
&=\frac{I_{\nu,\beta}}{2^{\nu-1}\Gamma(\nu)}\cdot 2^{\nu-1}\Gamma(\nu) -I_{\nu,\beta}=0,
\end{align*}
and
\begin{equation*}\lim_{x\rightarrow \infty}v(x)=\lim_{x\to \infty}M_{\nu,\beta}\mathrm{e}^{\beta x}  x^{\nu} K_{\nu}(x)-\lim_{x\to \infty}\int_x^{\infty}  \mathrm{e}^{\beta t}t^{\nu} K_{\nu}(t)\,\mathrm{d}t =0.
\end{equation*}
Now, we analyse the derivative of $v(x)$.  On using the differentiation formula (\ref{diffKi}) we obtain
\begin{equation}\label{uder1}v'(x)=\mathrm{e}^{\beta x}x^{\nu} [(1+\beta M_{\nu,\beta})K_{\nu}(x) -M_{\nu,\beta}K_{\nu-1}(x)].
\end{equation}
Based on expression (\ref{uder1}), we can argue that $v(x)>0$ for all $x>0$.  To see, this we note that for a given $\nu>\frac{1}{2}$ and $0<\beta<1$, we have either $1+\beta M_{\nu,\beta}\geq M_{\nu,\beta}$ or $1+\beta M_{\nu,\beta}<M_{\nu,\beta}$.  In the former case, as $K_\nu(x)>K_{\nu-1}(x)$ for all $x>0$ if $\nu>\frac{1}{2}$, then $v'(x)$ would be strictly positive for all $x>0$, and since $v(0)=0$ it would follow that $v(x)>0$ for all $x>0$.  (Note that this would contradict $v(0)=0$ and $\lim_{x\rightarrow\infty}v(x)=0$.)  In the latter case, we note that in the limit $x\downarrow0$ we have, by the limiting form (\ref{Ktend0}), that
\begin{equation*}v'(x) \sim (1+\beta M_{\nu,\beta})2^{\nu-1}\Gamma(\nu).
\end{equation*}
Hence, $v(x)$ is initially an increasing function of $x$.  Now, Corollary 2.3 of \cite{gaunt ineq1} tells us that for $\nu>\frac{1}{2}$ and $\alpha>1$ the equation $K_\nu(x)=\alpha K_{\nu-1}(x)$ has exactly one root in the region $x>0$.  Since $1+\beta M_{\nu,\beta}<M_{\nu,\beta}$, it follows that (\ref{uder1}) has exactly one root $x^*$ in the region $x>0$.  Putting everything together, we see that $v(x)$ takes the value 0 at the origin before monotonically increasing to a maximum value which takes place at $x=x^*$ and then decreases monotonically down to 0 in the limit $x\rightarrow\infty$.  Therefore, we conclude that $v(x)>0$ for all $x>0$, as required.   

Since $v(0)=0$, the constant in the upper bound of (\ref{lowerk2}) cannot be improved.  To establish that the constant in the lower bound cannot be improved, we obtain a limiting form for the integral.  Using (\ref{Ktendinfinity}) and integration by parts, we obtain, as $x\rightarrow\infty$,
\begin{align*}\int_x^\infty \mathrm{e}^{\beta t}t^{\nu} K_\nu(t)\,\mathrm{d}t&\sim \int_x^\infty \mathrm{e}^{\beta t}t^{\nu} \cdot \sqrt{\frac{\pi}{2t}}\mathrm{e}^{-t}\,\mathrm{d}t \\
&=\sqrt{\frac{\pi}{2}}\int_x^\infty \mathrm{e}^{-(1-\beta) t}t^{\nu-\frac{1}{2}}\,\mathrm{d}t \\
&=\sqrt{\frac{\pi}{2}}\frac{1}{1-\beta}\mathrm{e}^{-(1-\beta) x}x^{\nu-\frac{1}{2}}+\sqrt{\frac{\pi}{2}}\frac{\nu-\frac{1}{2}}{1-\beta}\int_x^\infty \mathrm{e}^{-(1-\beta) t}t^{\nu-\frac{3}{2}}\,\mathrm{d}t.
\end{align*}
But $\int_x^\infty \mathrm{e}^{-(1-\beta) t}t^{\nu-\frac{3}{2}}\,\mathrm{d}t\ll \int_x^\infty \mathrm{e}^{-(1-\beta) t}t^{\nu-\frac{1}{2}}\,\mathrm{d}t$, as $x\rightarrow\infty$, and so
\begin{equation}\label{zon1}\int_x^\infty \mathrm{e}^{\beta t}t^{\nu} K_\nu(t)\,\mathrm{d}t\sim \sqrt{\frac{\pi}{2}}\frac{1}{1-\beta}\mathrm{e}^{-(1-\beta) x}x^{\nu-\frac{1}{2}}, \quad x\rightarrow\infty. 
\end{equation}
Also, 
\begin{equation}\label{zon2}\frac{1}{1-\beta}\mathrm{e}^{\beta x}x^\nu K_{\nu}(x)\sim\sqrt{\frac{\pi}{2}}\frac{1}{1-\beta}\mathrm{e}^{-(1-\beta) x}x^{\nu-\frac{1}{2}}, \quad x\rightarrow\infty.
\end{equation} 
The equivalence between (\ref{zon1}) and (\ref{zon2}) confirms that the constant in the lower bound of (\ref{lowerk2}) cannot be improved. 

(iii) Now let $\nu\geq-\frac{1}{2}$.  Using integration by parts and the differentiation formula (\ref{diffKi}) gives that
\begin{align}\label{zoonu}\int_x^\infty \mathrm{e}^{\beta t}t^{\nu+1} K_\nu(t)\,\mathrm{d}t&=\mathrm{e}^{\beta x}x^{\nu+1}K_{\nu+1}(x)+\beta\int_x^\infty \mathrm{e}^{\beta t}t^{\nu+1} K_{\nu+1}(t)\,\mathrm{d}t \\
&\geq \mathrm{e}^{\beta x}x^{\nu+1}K_{\nu+1}(x)+\beta\int_x^\infty \mathrm{e}^{\beta t}t^{\nu+1} K_{\nu}(t)\,\mathrm{d}t, \nonumber
\end{align}
where we used (\ref{cake}) to obtain the inequality.  Since $K_{-\frac{1}{2}}(x)=K_{\frac{1}{2}}(x)$, we have equality when $\nu=-\frac{1}{2}$.  Rearranging yields the lower bound of (\ref{lowerk3}), as required.  The upper bound of (\ref{lowerk3}) follows from applying the upper bound of (\ref{lowerk2}) to the integral on the right-hand side of (\ref{zoonu}).

That the constant in the lower bound cannot be improved follows from the same argument that was used in part (ii).  For the upper bound, we note that, on the one hand,
\begin{align*}\int_0^\infty \mathrm{e}^{\beta t}t^{\nu+1} K_\nu(t)\,\mathrm{d}t&=\lim_{x\downarrow0}\mathrm{e}^{\beta x}x^{\nu+1}K_{\nu+1}(x)+\beta\int_0^\infty \mathrm{e}^{\beta t}t^{\nu+1} K_{\nu+1}(t)\,\mathrm{d}t \\
&=2^\nu\Gamma(\nu+1)+\beta I_{\nu+1,\beta},
\end{align*}
and on the other,
\begin{align*}\bigg(1+\frac{\beta I_{\nu+1,\beta}}{2^\nu \Gamma(\nu+1)}\bigg)\lim_{x\downarrow0}\mathrm{e}^{\beta x}x^{\nu+1} K_{\nu+1}(x)&=\bigg(1+\frac{\beta I_{\nu+1,\beta}}{2^\nu \Gamma(\nu+1)}\bigg)\cdot 2^\nu\Gamma(\nu+1)\\
&=2^\nu\Gamma(\nu+1)+\beta I_{\nu+1,\beta},
\end{align*}
which confirms that the constant cannot be improved.  The proof is complete.
\end{proof}

In the following theorem, inequalities (\ref{besi22}) and (\ref{besi33}) represent improvements on inequalities from Theorem 2.1 of \cite{gaunt ineq1}, whilst the final two inequalities are, to the best knowledge of this author, original in the literature.

\begin{theorem}\label{tiger1}Let $0<\gamma<1$ and $n>-1$. Then, for all $x>0$,
\begin{align}\label{besi11}\int_0^x \mathrm{e}^{-\gamma t}t^\nu I_{\nu+n}(t)\,\mathrm{d}t&>\mathrm{e}^{-\gamma x}x^\nu I_{\nu+n+1}(x), \quad\nu>-\tfrac{1}{2}(n+1),  \\
\label{besi22}\int_0^x t^\nu I_{\nu+n}(t)\,\mathrm{d}t&<\frac{x^\nu}{2\nu+n+1}\Big(2(\nu+n+1)I_{\nu+n+1}(x)-(n+1)I_{\nu+n+3}(x)\Big), \\
\label{besi225}&<\frac{2(\nu+n+1)}{2\nu+n+1}x^\nu I_{\nu+n+1}(x), \quad  \nu>-\tfrac{1}{2}(n+1), \\
\label{besi33}\int_0^x \mathrm{e}^{-\gamma t}t^{\nu}I_\nu(t)\,\mathrm{d}t&<\frac{\mathrm{e}^{-\gamma x}x^\nu}{(2\nu+1)(1-\gamma)}\Big(2(\nu+1)I_{\nu+1}(x)-I_{\nu+3}(x)\Big), \quad \nu\geq\tfrac{1}{2}, \\
\label{besi44}\int_0^x \mathrm{e}^{-\gamma t}t^{\nu+1} I_{\nu}(t)\,\mathrm{d}t&>\mathrm{e}^{-\gamma x}x^{\nu+1} I_{\nu+1}(x), \quad \nu>-1, \\
\label{besi55}\int_0^x \mathrm{e}^{-\gamma t}t^{\nu+1} I_{\nu}(t)\,\mathrm{d}t&<\frac{1}{1-\gamma}\mathrm{e}^{-\gamma x}x^{\nu+1} I_{\nu+1}(x), \quad\nu>-\tfrac{1}{2}.
\end{align}
The constants in the bounds (\ref{besi225}), (\ref{besi44}) and (\ref{besi55}) cannot be improved.  Inequalities (\ref{besi11}) and (\ref{besi44}) hold for all $\gamma>0$.  
\end{theorem}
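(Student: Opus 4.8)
The plan is to prove every bound by the same ``monotone auxiliary function'' device used in the proof of Theorem \ref{tiger}: for a target inequality comparing $\int_0^x \mathrm{e}^{-\gamma t}t^\nu I_{\nu+n}(t)\,\mathrm{d}t$ with a candidate expression $C(x)$, I would set $u(x)=C(x)-\int_0^x \mathrm{e}^{-\gamma t}t^\nu I_{\nu+n}(t)\,\mathrm{d}t$ (or its negative), check that $u(0)=0$ using the small-argument form $I_\mu(x)\sim (x/2)^\mu/\Gamma(\mu+1)$ (the condition $\nu>-\tfrac12(n+1)$ is exactly what makes the integrand integrable at the origin and the boundary term vanish), and then show that $u'$ has a constant sign. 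The one derivative identity I would reach for repeatedly is $\frac{\mathrm{d}}{\mathrm{d}x}\big[x^\nu I_{\nu+n+1}(x)\big]=x^\nu I_{\nu+n}(x)-(n+1)x^{\nu-1}I_{\nu+n+1}(x)$, obtained from the product rule together with $I_\mu'(x)=I_{\mu-1}(x)-\tfrac{\mu}{x}I_\mu(x)$; the special case $n=-1$ is the formula (\ref{diffone}).

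For (\ref{besi11}) and (\ref{besi44}) this goes through immediately: differentiating the candidate and applying the identity above collapses the expression to $u'(x)=\mathrm{e}^{-\gamma x}x^{\nu-1}I_{\nu+n+1}(x)\,(\gamma x+n+1)$ and $p'(x)=\gamma \mathrm{e}^{-\gamma x}x^{\nu+1}I_{\nu+1}(x)$ respectively, both manifestly positive for every $\gamma>0$ and $n>-1$ — which is why these two inequalities hold on the full range $\gamma>0$. For (\ref{besi22}) I would differentiate the candidate and then use the recurrence $I_{\nu+n}(x)-I_{\nu+n+2}(x)=\tfrac{2(\nu+n+1)}{x}I_{\nu+n+1}(x)$; the bookkeeping should telescope so that all but one term cancel, leaving $w'(x)=\tfrac{(n+1)(n+3)}{2\nu+n+1}x^{\nu-1}I_{\nu+n+3}(x)>0$, and (\ref{besi225}) then follows simply by discarding the (negative) term $-(n+1)I_{\nu+n+3}(x)$ on the right-hand side of (\ref{besi22}).

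The exponential upper bounds (\ref{besi55}) and (\ref{besi33}) need one extra idea. Here I would isolate the elementary principle that if $\phi(0)=0$ and $\phi'(x)>\phi(x)$ for $x>0$, then $\psi(x)=\tfrac{1}{1-\gamma}\mathrm{e}^{-\gamma x}\phi(x)-\int_0^x \mathrm{e}^{-\gamma t}\phi'(t)\,\mathrm{d}t$ satisfies $\psi(0)=0$ and $\psi'(x)=\tfrac{\gamma}{1-\gamma}\mathrm{e}^{-\gamma x}(\phi'(x)-\phi(x))>0$, whence $\int_0^x \mathrm{e}^{-\gamma t}\phi'(t)\,\mathrm{d}t<\tfrac{1}{1-\gamma}\mathrm{e}^{-\gamma x}\phi(x)$. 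For (\ref{besi55}) I take $\phi(x)=x^{\nu+1}I_{\nu+1}(x)$, so that $\phi'=x^{\nu+1}I_\nu$ by (\ref{diffone}) and the hypothesis $\phi'>\phi$ is precisely the standard monotonicity $I_\nu(x)>I_{\nu+1}(x)$. For (\ref{besi33}) I take $\phi(x)=\int_0^x t^\nu I_\nu(t)\,\mathrm{d}t$; the hypothesis $\phi'>\phi$ now reads $x^\nu I_\nu(x)>\int_0^x t^\nu I_\nu(t)\,\mathrm{d}t$, which I would prove as a separate monotone-function step whose derivative is $x^\nu(I_{\nu-1}(x)-I_\nu(x))$, positive exactly because $\nu\ge\tfrac12$ forces $I_{\nu-1}(x)>I_\nu(x)$ (the endpoint $\nu=\tfrac12$ being checked directly from $I_{\pm1/2}$). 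Feeding this into the principle and then bounding $\int_0^x t^\nu I_\nu(t)\,\mathrm{d}t$ by the right-hand side of (\ref{besi22}) yields (\ref{besi33}).

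Finally, the sharpness claims I would settle by asymptotics. The constants in (\ref{besi225}) and (\ref{besi44}) are shown optimal by letting $x\downarrow 0$ and matching leading powers via $I_\mu(x)\sim(x/2)^\mu/\Gamma(\mu+1)$, so that the two sides agree to leading order; the constant $\tfrac{1}{1-\gamma}$ in (\ref{besi55}) is shown optimal by letting $x\to\infty$, where $I_\mu(x)\sim \mathrm{e}^x/\sqrt{2\pi x}$ and an integration by parts gives $\int_0^x \mathrm{e}^{-\gamma t}t^{\nu+1}I_\nu(t)\,\mathrm{d}t\sim \tfrac{1}{1-\gamma}\mathrm{e}^{-\gamma x}x^{\nu+1}I_{\nu+1}(x)$. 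I expect the genuine obstacle to be (\ref{besi33}): a naive single monotone-function attempt (taking $\phi$ to be the target right-hand side directly) produces a derivative whose sign hinges on the combination $(2\nu+1)I_\nu(x)-2(\nu+1)I_{\nu+1}(x)+I_{\nu+3}(x)$, which is not of one sign, so the real work is recognising that the problem should instead be split into the clean auxiliary inequality $x^\nu I_\nu(x)>\int_0^x t^\nu I_\nu(t)\,\mathrm{d}t$ followed by a separate appeal to (\ref{besi22}).
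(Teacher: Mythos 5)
Your proposal is correct, and at its core it uses the same machinery as the paper: monotone auxiliary functions driven by the differentiation formula (\ref{diffone}) and the recurrence (\ref{Iidentity}), with sharpness read off from the limiting forms (\ref{Itend0}) and (\ref{Itendinfinity}). The differences are mostly mechanical. For (\ref{besi11}) and (\ref{besi44}) the paper avoids differentiation altogether: it writes the integrand as $\mathrm{e}^{-\gamma t}t^{-(n+1)}\cdot t^{\nu+n+1}I_{\nu+n}(t)$, pulls the strictly decreasing factor $\mathrm{e}^{-\gamma t}t^{-(n+1)}$ out of the integral, and evaluates the remainder exactly via (\ref{diffone}); your computations $u'(x)=\mathrm{e}^{-\gamma x}x^{\nu-1}I_{\nu+n+1}(x)(\gamma x+n+1)$ and $p'(x)=\gamma\mathrm{e}^{-\gamma x}x^{\nu+1}I_{\nu+1}(x)$ are correct and reach the same conclusions, including the validity for all $\gamma>0$. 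For (\ref{besi22}) the paper integrates the identity (\ref{gafor}) and then applies (\ref{besi11}) with $\gamma=0$ to the leftover integral of $t^\nu I_{\nu+n+2}(t)$; your direct telescoping to $w'(x)=\frac{(n+1)(n+3)}{2\nu+n+1}x^{\nu-1}I_{\nu+n+3}(x)$ is the same cancellation packaged in one step, and it checks out. The one place where you genuinely do more than the paper is (\ref{besi33}): the paper simply cites inequality (2.5) of \cite{gaunt ineq1} for the step $\int_0^x \mathrm{e}^{-\gamma t}t^\nu I_\nu(t)\,\mathrm{d}t<\frac{\mathrm{e}^{-\gamma x}}{1-\gamma}\int_0^x t^\nu I_\nu(t)\,\mathrm{d}t$, whereas your ``elementary principle'' combined with the auxiliary inequality $x^\nu I_\nu(x)>\int_0^x t^\nu I_\nu(t)\,\mathrm{d}t$ (valid for $\nu\geq\tfrac{1}{2}$ precisely because $I_{\nu-1}(x)>I_\nu(x)$ there, which is also the obstruction the paper flags when discussing why (\ref{besi33}) resists extension below $\nu=\tfrac{1}{2}$) reconstructs that cited result from scratch and makes the argument self-contained. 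Your sharpness arguments coincide with the paper's: leading-order matching at $x\downarrow0$ for (\ref{besi225}) and (\ref{besi44}), and the $x\to\infty$ asymptotic equivalence of the two sides for (\ref{besi55}).
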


\begin{proof}(i) The condition $\nu>-\tfrac{1}{2}(n+1)$ ensures that the integral exists. As $\gamma>0$ and $n>-1$, on using the differentiation formula (\ref{diffone}) we have that
\begin{align*}\int_0^x\mathrm{e}^{-\gamma t}t^{\nu}I_{\nu+n}(t)\,\mathrm{d}t &=\int_0^x\mathrm{e}^{-\gamma t}\frac{1}{t^{n+1}}t^{\nu+n+1}I_{\nu+n}(t)\,\mathrm{d}t\\
& >\frac{\mathrm{e}^{-\gamma x}}{x^{n+1}}\int_0^xt^{\nu+n+1}I_{\nu+n}(t)\,\mathrm{d}t =\mathrm{e}^{-\gamma x}x^{\nu}I_{\nu+n+1}(x),
\end{align*}
since by (\ref{Itend0}) we have $\lim_{x\downarrow 0}x^{\nu+n+1}I_{\nu+n+1}(x)=0$ if $n>-1$ and $\nu>-\tfrac{1}{2}(n+1)$.

(ii) Inequality (\ref{besi22}) improves inequality (2.6) of Theorem 2.1 of \cite{gaunt ineq1}.  To obtain this improvement, we follow the approach used to obtain that inequality, but argue more carefully.  A straightforward calculation, as given in \cite{gaunt ineq1}, using the differentiation formula (\ref{diffone}) and identity (\ref{Iidentity}) gives that
\begin{equation}\label{gafor} \frac{\mathrm{d}}{\mathrm{d}t} \big(t^{\nu} I_{\nu +n+1} (t)\big) = \frac{2\nu +n+1}{2(\nu +n+1)} t^{\nu} I_{\nu +n} (t) + \frac{n+1}{2(\nu +n+1)} t^{\nu} I_{\nu +n+2} (t). 
\end{equation}
Integrating both sides over $(0,x)$, applying the fundamental theorem of calculus and rearranging gives
\[\int_0^x t^{\nu} I_{\nu +n} (t)\,\mathrm{d}t = \frac{2(\nu +n+1)}{2\nu +n+1} x^{\nu} I_{\nu +n+1} (x) - \frac{n+1}{2\nu +n+1} \int_0^x t^{\nu} I_{\nu +n +2} (t)\,\mathrm{d}t.\]
Applying inequality (\ref{besi11}) with $\gamma=0$ to the integral on the right hand-side of the above expression gives that
\begin{align*}\int_0^x t^{\nu}I_{\nu+n}(t)\,\mathrm{d}t&<\frac{2(\nu+n+1)}{2\nu+n+1}x^\nu I_{\nu+n+1}(x)-\frac{n+1}{2\nu+n+1}x^{\nu}I_{\nu+n+3}(x) \\
&=\frac{x^\nu}{2\nu+n+1}\Big(2(\nu+n+1)I_{\nu+n+1}(x)-(n+1)I_{\nu+n+3}(x)\Big).
\end{align*}
Inequality (\ref{besi225}) (which is inequality (2.6) of Theorem 2.1 of \cite{gaunt ineq1}) follows from the fact that $I_{\nu+n+3}(x)>0$ for all $x>0$.  

We now prove that the constant in inequality (\ref{besi225}) cannot be improved.  From (\ref{Itend0}), we have on the one hand, as $x\downarrow0$,
\begin{equation}\label{form1}\int_0^x t^\nu I_{\nu+n}(t)\,\mathrm{d}t\sim\int_0^x \frac{t^{2\nu+n}}{2^{\nu+n}\Gamma(\nu+n+1)}\,\mathrm{d}t=\frac{x^{2\nu+n+1}}{2^{\nu+n}(2\nu+n+1)\Gamma(\nu+n+1)},
\end{equation}
and on the other,
\begin{align}\frac{2(\nu+n+1)}{2\nu+n+1}x^\nu I_{\nu+n+1}(x)&\sim \frac{2(\nu+n+1)}{2\nu+n+1}\frac{x^{2\nu+n+1}}{2^{\nu+n+1}\Gamma(\nu+n+2)}\nonumber\\
\label{form2}&=\frac{x^{2\nu+n+1}}{2^{\nu+n}(2\nu+n+1)\Gamma(\nu+n+1)},
\end{align}
which proves the claim.

(iii) Let $\nu\geq\frac{1}{2}$.  Then, a special case of inequality (2.5) of Theorem 2.5 of \cite{gaunt ineq1} states that, for all $x>0$,
\begin{equation}\label{gauntpa}\int_0^x \mathrm{e}^{-\gamma t}t^{\nu}I_\nu(t)\,\mathrm{d}t<\frac{\mathrm{e}^{-\gamma x}}{1-\gamma}\int_0^x t^{\nu}I_\nu(t)\,\mathrm{d}t.
\end{equation}
Bounding the integral on the right-hand side of (\ref{gauntpa}) using (\ref{besi22}) (with $n=0$) then yields inequality (\ref{besi33}), as required.

(iv) Let $\nu>-1$ so that the integral exists. Since $\gamma>0$,
\begin{equation*}\int_0^x \mathrm{e}^{-\gamma t}t^{\nu+1}I_\nu(t)\,\mathrm{d}t>\mathrm{e}^{-\gamma x}\int_0^x t^{\nu+1}I_\nu(t)\,\mathrm{d}t=\mathrm{e}^{-\gamma x}x^{\nu+1}I_{\nu+1}(x),
\end{equation*}
where we used (\ref{diffKi}) to evaluate the integral.

(v) Consider the function
\begin{equation*}u(x)=\frac{1}{1-\gamma}\mathrm{e}^{-\gamma x}x^{\nu+1}I_{\nu+1}(x)-\int_0^x\mathrm{e}^{-\gamma t}t^{\nu+1}I_\nu(t)\,\mathrm{d}t.
\end{equation*}
We argue that that $u(x)>0$ for all $x>0$, which will prove the result.  Using the differentiation formula (\ref{diffone}) we have that
\begin{align*}u'(x)&=\frac{1}{1-\gamma}\mathrm{e}^{-\gamma x}x^{\nu+1}\big(I_\nu(x)-\gamma I_{\nu+1}(x)\big)-\mathrm{e}^{-\gamma x}x^{\nu+1}I_\nu(x) \\
&=\frac{1}{1-\gamma}\mathrm{e}^{-\gamma x}x^{\nu+1}\big(I_\nu(x)-I_{\nu+1}(x)\big)>0,
\end{align*}
where we used (\ref{Imon}) to obtain the inequality.  Also, from (\ref{Itend0}), as $x\downarrow0$,
\begin{align*}u(x)&\sim \frac{1}{1-\gamma}\frac{x^{2\nu+2}}{\Gamma(\nu+2)2^{\nu+1}}-\int_0^x \frac{t^{2\nu+1}}{\Gamma(\nu+1)2^{\nu}}\,\mathrm{d}t\\
&=\frac{1}{1-\gamma}\frac{x^{2\nu+2}}{\Gamma(\nu+2)2^{\nu+1}}-\frac{x^{2\nu+2}}{2(\nu+1)\cdot\Gamma(\nu+1)2^{\nu}} \\
& =\frac{1}{1-\gamma}\frac{x^{2\nu+2}}{\Gamma(\nu+2)2^{\nu+1}}-\frac{x^{2\nu+2}}{\Gamma(\nu+2)2^{\nu+1}}=\frac{\gamma}{1-\gamma}\frac{x^{2\nu+2}}{\Gamma(\nu+2)2^{\nu+1}}>0.
\end{align*}
Thus, we conclude that $u(x)>0$ for all $x>0$, as required.

(vi) Finally, we prove that the constants in the bounds (\ref{besi44}) and (\ref{besi55}) cannot be improved.  Let $M>0$ and define
\begin{equation*}u_M(x)=M\mathrm{e}^{-\gamma x}x^{\nu+1}I_{\nu+1}(x)-\int_0^x\mathrm{e}^{-\gamma t}t^{\nu+1}I_\nu(t)\,\mathrm{d}t.
\end{equation*}
From an almost identical argument to the one used in part (v), we have that, as $x\downarrow0$,
\begin{equation*}u_M(x)\sim (M-1)\frac{x^{2\nu+2}}{\Gamma(\nu+2)2^{\nu+1}}.
\end{equation*}
Hence, if $M>1$ then $u_M(x)>0$ in a small positive neighbourhood of the origin, from which we conclude that the constant ($M=1$) in (\ref{besi44}) cannot be improved.

Let us now prove that the constant in the upper bound (\ref{besi55}) cannot be improved.  Again, arguing as we did in part (v), we have
\begin{equation*}u_M'(x)=\mathrm{e}^{-\gamma x}x^{\nu+1}\big(MI_\nu(x)-(M\gamma-1)I_{\nu+1}(x)\big).
\end{equation*}
From (\ref{Itendinfinity}), as $x\rightarrow\infty$,
\begin{equation*}u_M'(x)\sim \frac{1}{\sqrt{2\pi}}\big(M(1-\gamma)-1\big)x^{\nu+\frac{1}{2}}\mathrm{e}^{(1-\gamma)x}.
\end{equation*}
If $M<\frac{1}{1-\gamma}$, then $u_M'(x)$ decreases exponentially as $x\rightarrow\infty$, and hence there must exist a $x^*>0$ such that $u_M(x)<0$ for all $x\geq x^*$.  We therefore conclude that the constant ($M=\frac{1}{1-\gamma}$) in (\ref{besi55}) cannot be improved.  This completes the proof.
\end{proof}

\begin{remark}\label{rrrr}Let $n>-1$ and $\nu>-\frac{1}{2}(n+1)$.  Then from (\ref{besi11}) and (\ref{besi22}) we have the double inequality
\begin{align*}x^\nu I_{\nu+n+1}(x)<\int_0^x t^\nu &I_{\nu+n}(t)\,\mathrm{d}t <\frac{x^\nu}{2\nu+n+1}\Big(2(\nu+n+1)I_{\nu+n+1}(x)-(n+1)I_{\nu+n+3}(x)\Big).
\end{align*}
The double inequality is clearly tight as $\nu\rightarrow\infty$.  Also, on the one hand, as $x\rightarrow\infty$,
\begin{equation*}x^\nu I_{\nu+n+1}(x)\sim\frac{1}{\sqrt{2\pi}}x^{\nu-\frac{1}{2}}\mathrm{e}^{x},
\end{equation*}
and on the other,
\begin{equation*}\frac{x^\nu}{2\nu+n+1}\Big(2(\nu+n+1)I_{\nu+n+1}(x)-(n+1)I_{\nu+n+3}(x)\Big)\sim\frac{1}{\sqrt{2\pi}}x^{\nu-\frac{1}{2}}\mathrm{e}^{x},
\end{equation*}
from which we conclude that the double inequality is tight as $x\rightarrow\infty$.
\end{remark}

As noted by \cite{gaunt ineq1}, one can combine the inequalities of Theorems \ref{tiger} and \ref{tiger1} and the indefinite integral formula (\ref{besint6}) to obtain lower and upper bounds for the quantity $\mathscr{L}_{\nu}(x)\mathbf{L}_{\nu-1}(x)-\mathscr{L}_{\nu-1}(x)\mathbf{L}_{\nu}(x)$.  

\begin{corollary}\label{struvebessel}Let $\nu>\frac{1}{2}$.  Then, for all $x>0$,
\begin{equation}\label{dob11}\frac{x^{\nu}K_{\nu}(x)}{\sqrt{\pi}2^{\nu-1}\Gamma(\nu+\frac{1}{2})}<1-x\big(K_{\nu}(x)\mathbf{L}_{\nu-1}(x)+K_{\nu-1}(x)\mathbf{L}_{\nu}(x)\big)<\frac{x^{\nu}K_{\nu}(x)}{2^{\nu-1}\Gamma(\nu)}.
\end{equation}
Now let $\nu>-\frac{1}{2}$. Then, for all $x>0$,
\begin{align}\frac{x^{\nu-1}I_{\nu+1}(x)}{\sqrt{\pi}2^{\nu-1}\Gamma(\nu+\frac{1}{2})}<I_{\nu}(x)&\mathbf{L}_{\nu-1}(x)-I_{\nu-1}(x)\mathbf{L}_{\nu}(x) \nonumber \\
\label{dob22}&<\frac{x^{\nu-1}I_{\nu+1}(x)}{\sqrt{\pi}2^{\nu-1}\Gamma(\nu+\frac{1}{2})}\bigg\{1+\frac{1}{2\nu+1}\bigg(1-\frac{I_{\nu+3}(x)}{I_{\nu+1}(x)}\bigg)\bigg\}.
\end{align}
\end{corollary}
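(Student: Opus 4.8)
The plan is to turn each Struve--Bessel combination into a definite integral of the type handled in Theorems \ref{tiger} and \ref{tiger1}, using the indefinite integral formula (\ref{besint6}), and then to substitute the two-sided bounds already established. After this reduction, the two displayed chains of inequalities follow from elementary algebra and the evaluation of a single boundary term in each case.

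For the first double inequality I would take $\mathscr{L}_\nu(x)=\mathrm{e}^{\nu\pi i}K_\nu(x)$ in (\ref{besint6}). Since $\mathrm{e}^{(\nu-1)\pi i}=-\mathrm{e}^{\nu\pi i}$, the factor $\mathrm{e}^{\nu\pi i}$ cancels and the minus sign in (\ref{besint6}) is converted to a plus, so that $F(x):=\sqrt{\pi}2^{\nu-1}\Gamma(\nu+\tfrac12)\,x\big(K_\nu(x)\mathbf{L}_{\nu-1}(x)+K_{\nu-1}(x)\mathbf{L}_\nu(x)\big)$ is an antiderivative of $x^\nu K_\nu(x)$. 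Using the small-argument form (\ref{Ktend0}) of $K_\nu$ together with that of the modified Struve function, one checks that $F(0)=0$; since $F'\geq0$ and $\int_0^\infty t^\nu K_\nu(t)\,\mathrm{d}t=I_{\nu,0}=\sqrt{\pi}2^{\nu-1}\Gamma(\nu+\tfrac12)$ (the equality case of (\ref{doubleivb})), the monotone function $F$ has $F(\infty)=I_{\nu,0}$. Hence $\int_x^\infty t^\nu K_\nu(t)\,\mathrm{d}t=I_{\nu,0}-F(x)$, which rearranges to
\[1-x\big(K_\nu(x)\mathbf{L}_{\nu-1}(x)+K_{\nu-1}(x)\mathbf{L}_\nu(x)\big)=\frac{1}{I_{\nu,0}}\int_x^\infty t^\nu K_\nu(t)\,\mathrm{d}t.\]
Inserting the $\beta=0$ form of (\ref{lowerk2}), namely $x^\nu K_\nu(x)<\int_x^\infty t^\nu K_\nu(t)\,\mathrm{d}t<\tfrac{I_{\nu,0}}{2^{\nu-1}\Gamma(\nu)}x^\nu K_\nu(x)$ for $\nu>\tfrac12$, and dividing by $I_{\nu,0}$, yields (\ref{dob11}).

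For the second double inequality I would instead take $\mathscr{L}_\nu(x)=I_\nu(x)$, so that $G(x):=\sqrt{\pi}2^{\nu-1}\Gamma(\nu+\tfrac12)\,x\big(I_\nu(x)\mathbf{L}_{\nu-1}(x)-I_{\nu-1}(x)\mathbf{L}_\nu(x)\big)$ is an antiderivative of $x^\nu I_\nu(x)$. The form (\ref{Itend0}) combined with the series for $\mathbf{L}_\nu$ gives $G(0)=0$, whence $G(x)=\int_0^x t^\nu I_\nu(t)\,\mathrm{d}t$ and
\[I_\nu(x)\mathbf{L}_{\nu-1}(x)-I_{\nu-1}(x)\mathbf{L}_\nu(x)=\frac{1}{\sqrt{\pi}2^{\nu-1}\Gamma(\nu+\tfrac12)\,x}\int_0^x t^\nu I_\nu(t)\,\mathrm{d}t.\]
I would then apply Remark \ref{rrrr} with $n=0$ (equivalently (\ref{besi11}) and (\ref{besi22}) with $n=0$), which bounds $\int_0^x t^\nu I_\nu(t)\,\mathrm{d}t$ below by $x^\nu I_{\nu+1}(x)$ and above by $\tfrac{x^\nu}{2\nu+1}\big(2(\nu+1)I_{\nu+1}(x)-I_{\nu+3}(x)\big)$. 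Dividing by $\sqrt{\pi}2^{\nu-1}\Gamma(\nu+\tfrac12)\,x$ gives the lower bound of (\ref{dob22}) immediately, while the upper bound follows after collecting the two $I$-terms over the common factor $\tfrac{x^{\nu-1}I_{\nu+1}(x)}{\sqrt{\pi}2^{\nu-1}\Gamma(\nu+\frac12)}$ and recognising the bracket $1+\tfrac{1}{2\nu+1}\big(1-\tfrac{I_{\nu+3}(x)}{I_{\nu+1}(x)}\big)$.

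The places requiring genuine care are the boundary evaluations and the endpoint $\beta=0$ in the $K$-case. The vanishing of $F(0)$ and $G(0)$ rests on the small-argument behaviour $\mathbf{L}_\mu(x)\sim\tfrac{(x/2)^{\mu+1}}{\Gamma(\tfrac32)\Gamma(\mu+\tfrac32)}$ of the modified Struve function combined with (\ref{Ktend0}) and (\ref{Itend0}); in each case the surviving power of $x$ is positive (it is $x^{2\nu+1}$ for $G$), so the products tend to $0$. The main obstacle is that Theorem \ref{tiger} is stated only for $0<\beta<1$, so the two-sided bound I need for $F$ is the boundary case $\beta=0$. The upper bound there is exactly the $v(x)>0$ argument of Theorem \ref{tiger}(ii) run with $\beta=0$, and the lower bound can be obtained directly: by (\ref{diffKi}) the function $w(x)=\int_x^\infty t^\nu K_\nu(t)\,\mathrm{d}t-x^\nu K_\nu(x)$ has $w'(x)=x^\nu\big(K_{\nu-1}(x)-K_\nu(x)\big)<0$ for $\nu>\tfrac12$ by (\ref{cake}), while $w(\infty)=0$, so $w(x)>0$ on $(0,\infty)$. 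Once this endpoint bound is in place, the remainder is bookkeeping.
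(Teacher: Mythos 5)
Your proposal is correct and follows essentially the same route as the paper: convert each Struve--Bessel combination into $\int_x^\infty t^\nu K_\nu(t)\,\mathrm{d}t$ and $\int_0^x t^\nu I_\nu(t)\,\mathrm{d}t$ via (\ref{besint6}) after fixing the constant of integration from a limiting form, then insert the two-sided bounds (\ref{lowerk2}) and (\ref{besi11})--(\ref{besi22}) with $n=0$ and rearrange. The only real differences are cosmetic and in your favour: you pin down the $K$-side constant from the behaviour at $x\downarrow 0$ together with $F(\infty)=I_{\nu,0}$ rather than from the limit $x(K_\nu(x)\mathbf{L}_{\nu-1}(x)+K_{\nu-1}(x)\mathbf{L}_{\nu}(x))\to 1$ as $x\to\infty$ used in the paper, and you explicitly justify the $\beta=0$ endpoint of (\ref{lowerk2}) (via $w'(x)=x^\nu(K_{\nu-1}(x)-K_\nu(x))<0$ and the $v(x)>0$ argument), a point the paper applies tacitly even though Theorem \ref{tiger} is stated for $0<\beta<1$.
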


\begin{proof}From the limiting forms (\ref{Ktendinfinity}) and (\ref{struveinfinity}) for $K_{\nu}(x)$ and $\mathbf{L}(x)$, respectively, we have that
\[\lim_{x\rightarrow\infty}(x(K_{\nu}(x)\mathbf{L}_{\nu-1}(x)+K_{\nu-1}(x)\mathbf{L}_{\nu}(x)))=1, \quad \mbox{for $\nu>\tfrac{1}{2}$}.\]
Also, from the limiting forms (\ref{Itend0}) and (\ref{struve0}) for $I_{\nu}(x)$ and $\mathbf{L}(x)$, we have
\[\lim_{x\downarrow 0}(x(I_{\nu}(x)\mathbf{L}_{\nu-1}(x)-I_{\nu-1}(x)\mathbf{L}_{\nu}(x)))=0, \quad \mbox{for $\nu>-\tfrac{1}{2}$}.\]
Therefore, applying the indefinite integral formula (\ref{besint6}) gives
\begin{align}\label{chicot0}\int_x^\infty t^{\nu}K_{\nu}(t)\,\mathrm{d}t&=\sqrt{\pi}2^{\nu-1}\Gamma(\nu+\tfrac{1}{2})\big[1-x(K_{\nu}(x)\mathbf{L}_{\nu-1}(x)+K_{\nu-1}(x)\mathbf{L}_{\nu}(x))\big], \quad \nu>\tfrac{1}{2}, \\
\label{chicot}\int_0^xt^{\nu}I_{\nu}(t)\,\mathrm{d}t&=\sqrt{\pi}2^{\nu-1}\Gamma(\nu+\tfrac{1}{2})x(I_{\nu}(x)\mathbf{L}_{\nu-1}(x)-I_{\nu-1}(x)\mathbf{L}_{\nu}(x)), \quad \nu>-\tfrac{1}{2}.
\end{align}
The integral $\int_x^\infty t^{\nu}K_{\nu}(t)\,\mathrm{d}t$ can be bounded using the double inequality (\ref{lowerk2}).  Applying this bound to (\ref{chicot0}), rearranging and using that $I_{\nu,0}=\sqrt{\pi}\Gamma(\nu+\frac{1}{2})2^{\nu-1}$ yields the double inequality (\ref{dob11}).  To obtain (\ref{dob22}) we proceed similarly by bounding the integral $\int_0^xt^{\nu}I_{\nu}(t)\,\mathrm{d}t$ using inequalities (\ref{besi11}) and (\ref{besi22}) (with $n=0$).  The result now follows on using the formula (\ref{chicot}) and rearranging.
\end{proof}


\begin{remark}We know from Theorem \ref{tiger} that the constants in the lower and upper bounds of (\ref{dob11}) are best possible.  The double inequality (\ref{dob22}) is tight in the limits $\nu\rightarrow\infty$ and $x\rightarrow\infty$.  To elaborate further, we denote $F_\nu(x)=I_{\nu}(x)\mathbf{L}_{\nu-1}(x)-I_{\nu-1}(x)\mathbf{L}_{\nu}(x)$ and denote the lower and upper bounds in (\ref{dob22}) by $L_\nu(x)$ and $U_\nu(x)$.  We now note the bound $\frac{I_{\nu+1}(x)}{I_\nu(x)}>\frac{x}{2(\nu+1)+x}$, $\nu>-1$, which is the simplest lower bound of a sequence of more complicated rational lower bounds given in \cite{nasell2}.  We thus obtain that the relative error in approximating $F_\nu(x)$ by either $L_\nu(x)$ or $U_\nu(x)$ is at most
\begin{align*}\frac{1}{2\nu+1}\bigg(1-\frac{I_{\nu+3}(x)}{I_{\nu+1}(x)}\bigg)&=\frac{1}{2\nu+1}\bigg(1-\frac{I_{\nu+3}(x)}{I_{\nu+2}(x)}\frac{I_{\nu+2}(x)}{I_{\nu+1}(x)}\bigg) \\
&<\frac{1}{2\nu+1}\bigg(1-\frac{x^2}{(2(\nu+3)+x)(2(\nu+2)+x)}\bigg)\\
&=\frac{4(\nu+2)(\nu+3)+(4\nu+10)x}{(2\nu+1)(2(\nu+2)+x)(2(\nu+3)+x)},
\end{align*}
which, for fixed $x$, has rate $\nu^{-1}$ as $\nu\rightarrow\infty$ and, for fixed $\nu$, has rate $x^{-1}$ as $x\rightarrow\infty$.  

We used Mathematica to compute the relative error in approximating $F_\nu(x)$ by $L_\nu(x)$ and $U_\nu(x)$, and numerical results are given in Tables \ref{table1} and \ref{table2}.  We observe that, for a given $x$, the relative error in approximating $F_\nu(x)$ by either $L_\nu(x)$ or $U_\nu(x)$ decreases as $\nu$ increases.  We also notice from Table \ref{table1} that, for a given $\nu$, the relative error in approximating $F_\nu(x)$ by $L_\nu(x)$ decreases as $x$ increases.  However, from Table \ref{table2} we see that, for a given $\nu$, as $x$ increases the relative error in approximating $F_\nu(x)$ by $U_\nu(x)$ initially increases before decreasing.  This is because, for $\nu>-\frac{1}{2}$, $\lim_{x\downarrow0}\frac{U_\nu(x)}{F_\nu(x)}=1$, meaning that the relative error in approximating $F_\nu(x)$ by $U_\nu(x)$ is 0 in the limit $x\downarrow0$.  The limit $\lim_{x\downarrow0}\frac{U_\nu(x)}{F_\nu(x)}=1$ follows from combining the formula $F_\nu(x)=\frac{1}{\sqrt{\pi}2^{\nu-1}\Gamma(\nu+\frac{1}{2})}x^{-1}\int_0^x t^\nu I_\nu(t)\,\mathrm{d}t$, the limit $\lim_{x\downarrow0}\frac{I_{\nu+3}(x)}{I_{\nu+1}(x)}=0$ and the limiting forms (\ref{form1}) and (\ref{form2}).

\begin{table}[h]
\begin{center}
\caption{\footnotesize{Relative error in approximating $F_\nu(x)$ by $L_\nu(x)$.}}
\label{table1}
{\scriptsize
\begin{tabular}{|c|rrrrrrr|}
\hline
 \backslashbox{$\nu$}{$x$}      &    0.5 &    5 &    10 &    15 &    25 &    50 & 100   \\
 \hline
$-0.25$ & $0.6603$& $0.2881$ & $0.1124$ & $0.0709$ & $0.0414$ & $0.0203$ & 0.0101  \\
0 & $0.4948$& $0.2359$ & $0.1076$ & $0.0695$ & $0.0409$ & $0.0202$ & 0.0101  \\
2.5 & $0.1424$& $0.1129$ & $0.0776$ & $0.0570$ & $0.0366$ & $0.0192$ & 0.0098  \\
5 & $0.0832$& $0.0746$ & $0.0595$ & $0.0475$ &  $0.0329$  &  0.0182 & 0.0096 \\
7.5 & $0.0588$& $0.0552$ & $0.0476$ & $0.0403$ & $0.0296$ & $0.0173$ & 0.0093  \\ 
10 & $0.0454$& $0.0436$ & 0.0394  & 0.0346    &  0.0268     &  0.0164 & 0.0091 \\  
  \hline
\end{tabular}
}
\end{center}
\end{table}
\begin{table}[h]
\begin{center}
\caption{\footnotesize{Relative error in approximating $F_\nu(x)$ by $U_\nu(x)$.}}
\label{table2}
{\scriptsize
\begin{tabular}{|c|rrrrrrr|}
\hline
 \backslashbox{$\nu$}{$x$}      &    0.5 &    5 &    10 &    15 &    25 &    50 & 100   \\
 \hline
$-0.25$ & $0.0103$& $0.4675$ & $0.4323$ & $0.3268$ & $0.2137$ & $0.1134$ & 0.0584  \\
0 & $0.0051$& $0.2038$ & $0.1973$ & $0.1543$ & $0.1034$ & $0.0558$ & 0.0290  \\
2.5 & $0.0001$& $0.0084$ & $0.0144$ & $0.0149$ & $0.0125$ & $0.0080$ & 0.0045  \\
5 & $0.0000$& $0.0017$ & $0.0039$ & $0.0049$ &  $0.0050$  &  0.0037 & 0.0023 \\ 
7.5 & $0.0000$& $0.0005$ & $0.0015$ & $0.0021$ &  $0.0025$  &  0.0022 & 0.0014 \\
10 & $0.0000$& $0.0002$ & 0.0007  & 0.0011    &  0.0015     &  0.0014 & 0.0010 \\  
  \hline
\end{tabular}
}
\end{center}
\end{table}
\end{remark}

From inequality (\ref{besi33}) of Theorem \ref{tiger1} we obtain the bound
\begin{equation}\label{bes18} \int_0^x \mathrm{e}^{-\gamma t}t^\nu I_\nu(t)\,\mathrm{d}t<\frac{2(\nu+1)}{(2\nu+1)(1-\gamma)}\mathrm{e}^{-\gamma x}x^\nu I_{\nu+1}(x), \quad \nu\geq \tfrac{1}{2},\:0<\gamma<1,
\end{equation}
which will be used in Section \ref{sec3} to bound the first expression in (\ref{mlhwsb2}) (note that the case $\nu>-\frac{1}{2}$, $\gamma\leq0$ is easily dealt with in Proposition \ref{propone}).  However, it would be desirable to obtain an analogue of (\ref{bes18}) that holds for all $\nu>-\frac{1}{2}$ and $0<\gamma<1$.  One difficulty in extending the parameter range to $\nu>-\frac{1}{2}$ and $0<\gamma<1$ is that the derivation of inequality (\ref{gauntpa}) (see parts (ii), (iii) and (iv) of Theorem 2.1 of \cite{gaunt ineq1}), which is used to prove inequality (\ref{besi33}), relies heavily on the inequality $I_{\nu}(x)<I_{\nu-1}(x)$ which only holds for all $x>0$ if $\nu\geq\frac{1}{2}$.  

In the following theorem, we make some progress towards extending the parameter range to $\nu>-\frac{1}{2}$ and $0<\gamma<1$, and then conclude this section by stating some open problems.  Before stating the theorem we shall introduce some notation.  Suppose $\nu>-\frac{1}{2}$.  Then we let $a_\nu$ be the largest number in the interval $[0,1]$ such that, for all $x>0$, 
\begin{equation}\label{anu}I_{\nu+1}(x)<(1-a_\nu)I_\nu(x)+a_\nu I_{\nu+2}(x).
\end{equation} 
That there exists such an $a_\nu$ in the interval $[0,1]$ can be seen from inequality (\ref{Imon}).  Inequality (\ref{anu}) is a useful refinement of the well-known inequality $I_{\nu+1}(x)<I_\nu(x)$, $\nu>-\frac{1}{2}$.  However, as far as the author is aware, the inequality  has not been studied in the literature.  A detailed analytic study of this inequality would go beyond the scope of this paper, but see Remark \ref{remopen} and Open Problem \ref{openprob2} below.  With this notation we may state our theorem.

\begin{theorem}\label{openthm}Suppose that $0<\gamma<\min\big\{\frac{1}{2(\nu+1)a_\nu},\frac{2\nu+1}{2(\nu+1)(1-a_\nu)}\big\}$.  Then, for all $x>0$,
\begin{equation}\label{op14}\int_0^x \mathrm{e}^{-\gamma t}t^\nu I_\nu(t)\,\mathrm{d}t<\frac{2(\nu+1)}{(2\nu+1)(1-(1-a_\nu)\gamma)-(1-a_\nu)\gamma}\mathrm{e}^{-\gamma x}x^\nu I_{\nu+1}(x). 
\end{equation}
\end{theorem}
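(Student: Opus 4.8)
The plan is to adapt the auxiliary-function method used in part (v) of Theorem \ref{tiger1}. Writing $C$ for the constant on the right-hand side of (\ref{op14}) and $D=(2\nu+1)(1-(1-a_\nu)\gamma)-(1-a_\nu)\gamma$ for its denominator, I would define
\[w(x)=C\mathrm{e}^{-\gamma x}x^{\nu}I_{\nu+1}(x)-\int_0^x \mathrm{e}^{-\gamma t}t^\nu I_\nu(t)\,\mathrm{d}t,\]
and aim to show $w(x)>0$ for all $x>0$. Since $\nu>-\frac12$, the limiting form (\ref{Itend0}) gives $x^\nu I_{\nu+1}(x)\to0$ as $x\downarrow0$, and the integrand is integrable at the origin, so $w(0)=0$. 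Hence (\ref{op14}) will follow once I establish $w'(x)>0$ for every $x>0$.

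To compute $w'$ I would use the derivative formula (\ref{gafor}) with $n=0$, namely $\frac{\mathrm{d}}{\mathrm{d}x}\big(x^\nu I_{\nu+1}(x)\big)=\frac{2\nu+1}{2(\nu+1)}x^\nu I_\nu(x)+\frac{1}{2(\nu+1)}x^\nu I_{\nu+2}(x)$, together with the fundamental theorem of calculus, obtaining
\[w'(x)=\mathrm{e}^{-\gamma x}x^\nu\left[C\left(\tfrac{2\nu+1}{2(\nu+1)}I_\nu(x)+\tfrac{1}{2(\nu+1)}I_{\nu+2}(x)-\gamma I_{\nu+1}(x)\right)-I_\nu(x)\right].\]
The essential step is to control $-\gamma I_{\nu+1}(x)$ via the refined inequality (\ref{anu}). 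Assuming for the moment $C>0$, so that $C\gamma>0$, the strict bound $-\gamma I_{\nu+1}(x)>-\gamma(1-a_\nu)I_\nu(x)-\gamma a_\nu I_{\nu+2}(x)$ lets me bound the bracket strictly below by
\[\left[C\left(\tfrac{2\nu+1}{2(\nu+1)}-\gamma(1-a_\nu)\right)-1\right]I_\nu(x)+C\left[\tfrac{1}{2(\nu+1)}-\gamma a_\nu\right]I_{\nu+2}(x).\]

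The decisive observation is that a short computation gives $D=(2\nu+1)-2(\nu+1)(1-a_\nu)\gamma$, which is exactly the numerator $(2\nu+1)-2(\nu+1)\gamma(1-a_\nu)$ appearing in the coefficient of $I_\nu(x)$; with $C=2(\nu+1)/D$ this coefficient collapses to $D/D-1=0$. This exact cancellation is the whole point of the particular constant chosen in (\ref{op14}). It then remains to check that the two hypotheses on $\gamma$ do precisely the required work: the constraint $\gamma<\frac{2\nu+1}{2(\nu+1)(1-a_\nu)}$ is exactly the condition $D>0$, which secures $C>0$ (legitimising the application of (\ref{anu}) above), while $\gamma<\frac{1}{2(\nu+1)a_\nu}$ makes the factor $\frac{1}{2(\nu+1)}-\gamma a_\nu$ positive, so the surviving $I_{\nu+2}(x)$ term is strictly positive. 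Since (\ref{anu}) is strict, I conclude $w'(x)>0$ for all $x>0$, and combined with $w(0)=0$ this gives $w(x)=\int_0^x w'(t)\,\mathrm{d}t>0$, proving (\ref{op14}). I expect the only real subtlety to be this bookkeeping — verifying that $C$ forces the $I_\nu(x)$ coefficient to vanish identically and that the two stated bounds on $\gamma$ are exactly $D>0$ and $\frac{1}{2(\nu+1)}-\gamma a_\nu>0$; once this is in place, the positivity of $w'$ and hence of $w$ is immediate, with the boundary behaviour at the origin again supplied by (\ref{Itend0}).
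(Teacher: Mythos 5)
Your argument is correct and is essentially the paper's own proof in differentiated form: the paper integrates the identity (\ref{gafor}) over $(0,x)$, applies (\ref{anu}) under the integral sign, and rearranges the resulting integral inequality, whereas you apply the same two ingredients pointwise to $w'$ and let the same cancellation annihilate the $I_\nu(x)$ coefficient. The two hypotheses on $\gamma$ play exactly the roles you identify --- $\gamma<\frac{2\nu+1}{2(\nu+1)(1-a_\nu)}$ gives $D>0$ (hence $C>0$, which in the paper is the condition allowing the rearrangement), and $\gamma<\frac{1}{2(\nu+1)a_\nu}$ makes the surviving $I_{\nu+2}(x)$ term nonnegative --- so there is nothing to add.
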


\begin{proof}On using the differentiation formula (\ref{gafor}) we have that
\begin{equation*}\frac{\mathrm{d}}{\mathrm{d}t}\big(\mathrm{e}^{-\gamma t}t^\nu I_{\nu+1}(t)\big)=\frac{2\nu+1}{2(\nu+1)}\mathrm{e}^{-\gamma t}t^\nu I_{\nu}(t)+\frac{1}{2(\nu+1)}\mathrm{e}^{-\gamma t}t^\nu I_{\nu+2}(t)-\gamma\mathrm{e}^{-\gamma t}t^\nu I_{\nu+1}(t).
\end{equation*}
Integrating over $(0,x)$ and then rearranging gives that
\begin{align*}\int_0^x \mathrm{e}^{-\gamma t}t^\nu I_{\nu}(t)\,\mathrm{d}t&=\frac{2(\nu+1)}{2\nu+1}\mathrm{e}^{-\gamma x}x^\nu I_{\nu+1}(x)+\frac{2\gamma(\nu+1)}{2\nu+1}\int_0^x \mathrm{e}^{-\gamma t}t^\nu I_{\nu+1}(t)\,\mathrm{d}t\\
&\quad-\frac{1}{2\nu+1}\int_0^x \mathrm{e}^{-\gamma t}t^\nu I_{\nu+2}(t)\,\mathrm{d}t.
\end{align*}
We now apply inequality (\ref{anu}) to obtain
\begin{align}\int_0^x \mathrm{e}^{-\gamma t}t^\nu I_{\nu}(t)\,\mathrm{d}t&<\frac{2(\nu+1)}{2\nu+1}\mathrm{e}^{-\gamma x}x^\nu I_{\nu+1}(x)+\frac{2(1-a_\nu)\gamma(\nu+1)}{2\nu+1}\int_0^x \mathrm{e}^{-\gamma t}t^\nu I_{\nu}(t)\,\mathrm{d}t\nonumber\\
\label{aligned}&\quad-\frac{1-2a_\nu\gamma(\nu+1)}{2\nu+1}\int_0^x \mathrm{e}^{-\gamma t}t^\nu I_{\nu+2}(t)\,\mathrm{d}t \\
&<\frac{2(\nu+1)}{2\nu+1}\mathrm{e}^{-\gamma x}x^\nu I_{\nu+1}(x)+\frac{2(1-a_\nu)\gamma(\nu+1)}{2\nu+1}\int_0^x \mathrm{e}^{-\gamma t}t^\nu I_{\nu}(t)\,\mathrm{d}t. \nonumber
\end{align}
As $\gamma<\frac{1}{2(\nu+1)a_\nu}$ and $\gamma<\frac{2\nu+1}{2(\nu+1)(1-a_\nu)}$ we can rearrange to obtain
\begin{equation*}\int_0^x \mathrm{e}^{-\gamma t}t^\nu I_{\nu}(t)\,\mathrm{d}t<\frac{2(\nu+1)}{(2\nu+1)(1-(1-a_\nu)\gamma)-(1-a_\nu)\gamma}\mathrm{e}^{-\gamma x}x^\nu I_{\nu+1}(x),
\end{equation*}
as required.
\end{proof}

\begin{corollary}\label{cord} Suppose that $0<\gamma<\frac{2\nu+1}{2(\nu+1)}$.  Then, for all $x>0$,
\begin{equation}\label{op12}\int_0^x \mathrm{e}^{-\gamma t}t^\nu I_\nu(t)\,\mathrm{d}t<\frac{2(\nu+1)}{(2\nu+1)(1-\gamma)-\gamma}\mathrm{e}^{-\gamma x}x^\nu I_{\nu+1}(x).  
\end{equation}
\end{corollary}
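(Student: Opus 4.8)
The plan is to recognize that this corollary is nothing more than Theorem~\ref{openthm} evaluated at the admissible choice $a_\nu=0$, and to verify that every step of that theorem's proof survives this specialization. The key observation is that inequality (\ref{anu}) with $a_\nu$ replaced by $0$ collapses to $I_{\nu+1}(x)<I_\nu(x)$, which is the classical inequality (\ref{Imon}), valid for all $\nu>-\tfrac{1}{2}$ and $x>0$. Since this weaker inequality already holds, I would simply rerun the argument of Theorem~\ref{openthm} using the value $0$ in the role played there by $a_\nu$, relying only on (\ref{Imon}) rather than on the sharper refinement (\ref{anu}).

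Concretely, I would first integrate the differentiation formula (\ref{gafor}) exactly as in the proof of Theorem~\ref{openthm}, obtaining the identity that expresses $\int_0^x \mathrm{e}^{-\gamma t}t^\nu I_\nu(t)\,\mathrm{d}t$ in terms of $\mathrm{e}^{-\gamma x}x^\nu I_{\nu+1}(x)$ together with integrals of $I_{\nu+1}$ and $I_{\nu+2}$. Next, I would bound the $I_{\nu+1}$ integral using $I_{\nu+1}(t)<I_\nu(t)$ and then simply discard the $I_{\nu+2}$ integral: with $a_\nu=0$ the coefficient $1-2a_\nu\gamma(\nu+1)$ appearing in (\ref{aligned}) equals $1>0$, so the term $-\frac{1}{2\nu+1}\int_0^x \mathrm{e}^{-\gamma t}t^\nu I_{\nu+2}(t)\,\mathrm{d}t$ is strictly negative (since $I_{\nu+2}(t)>0$) and may be dropped. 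This is precisely the mechanism by which the constraint $\gamma<\frac{1}{2(\nu+1)a_\nu}$ of Theorem~\ref{openthm} becomes vacuous in the limit $a_\nu\to 0$, so no upper restriction of that type is required here.

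The only surviving constraint is $\gamma<\frac{2\nu+1}{2(\nu+1)}$, which is the value of $\frac{2\nu+1}{2(\nu+1)(1-a_\nu)}$ when $a_\nu=0$; I would invoke it to guarantee that the self-referential coefficient $\frac{2\gamma(\nu+1)}{2\nu+1}$ is strictly less than $1$, which permits the rearrangement isolating the integral. A short algebraic check then confirms that the resulting denominator $(2\nu+1)-2\gamma(\nu+1)$ equals $(2\nu+1)(1-\gamma)-\gamma$, matching the constant claimed in (\ref{op12}).

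I do not anticipate any genuine obstacle, since this corollary is a clean specialization that trades the sharper but implicit constant of (\ref{op14}) for an explicit one depending only on $\nu$ and $\gamma$. The only point demanding care is the bookkeeping: one must confirm that using $0$ in place of $a_\nu$ keeps all sign conditions intact---in particular that the discarded $I_{\nu+2}$ term is genuinely negative---and that the single remaining constraint $\gamma<\frac{2\nu+1}{2(\nu+1)}$ is exactly what the final rearrangement needs.
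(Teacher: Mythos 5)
Your proof is correct and follows essentially the same route as the paper: Corollary \ref{cord} is obtained by specializing the parameter in Theorem \ref{openthm} so that (\ref{anu}) reduces to the classical inequality $I_{\nu+1}(x)<I_\nu(x)$, whereupon both the constant and the admissible range of $\gamma$ collapse to exactly those stated in (\ref{op12}). The only discrepancy is cosmetic: the paper's one-line proof says to let $a_\nu\uparrow1$, which would not reproduce the stated constant or range, whereas your choice $a_\nu=0$ (equivalently $a_\nu\downarrow0$) is the specialization that actually does, so your bookkeeping reflects the correct reading of the argument.
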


\begin{proof}Let $a_\nu\uparrow1$ in Theorem \ref{openthm}.
\end{proof}

\begin{remark}\label{remopen}
\begin{enumerate}

\item The constant in (\ref{op14}) is larger than the constant in (\ref{bes18}).

\item We could obtain a refinement of inequality (\ref{op14}) by using inequality (\ref{besi11}) to lower bound the final integral of (\ref{aligned}), just as we did in deriving inequality (\ref{besi11}).  This refinement would, however, not be useful in extending the parameter range any further.

\item Determining the parameter range of validity in Corollary (\ref{cord}) is immediate, but a little more work is needed when working with Theorem \ref{openthm}.  For example, suppose we are interested in regime $\nu\geq0$.  Then it is immediate that inequality (\ref{op12}) is valid for $0<\gamma<\frac{1}{2}$.  One can numerically check that $a_0\approx0.25$.  Therefore, inequality (\ref{op14}) holds for $\nu\geq0$ and $0<\gamma<\min\{\frac{1}{2\cdot 0.25},\frac{1}{2(1-0.25)}\}=\frac{1}{2(1-0.25)}=0.66$.

\end{enumerate}
\end{remark}

We end this section by stating some open problems.  Open Problem \ref{openprob1} is of particular interest to this author because it would have a useful application to Stein's method for VG approximation.  Open Problem \ref{openprob2}  will most likely not be useful in solving Open Problem \ref{openprob1}, but as inequality (\ref{anu}) is a useful refinement of the classical inequality $I_{\nu+1}(x)<I_\nu(x)$, $\nu>-\frac{1}{2}$, it is considered to be interesting by this author.

\begin{open}\label{openprob1}Find a constant $C_{\nu,\beta}>0$ such that, for all $x>0$,
\begin{equation*}\label{bes1} \int_0^x \mathrm{e}^{-\gamma t}t^\nu I_\nu(t)\,\mathrm{d}t<C_{\nu,\beta}\mathrm{e}^{-\gamma x}x^\nu I_{\nu+1}(x), \quad \nu>-\tfrac{1}{2},\:0<\gamma<1.
\end{equation*}
\end{open}

\begin{open}\label{openprob2}Let $\nu>-\frac{1}{2}$.  Establish lower and upper bounds for $a_\nu$ that improve on the trivial estimate $0\leq a_\nu\leq1$.
\end{open}

There is of course an analogous problem for the modified Bessel function $K_\nu(x)$.  Let $\nu>-\frac{1}{2}$.  Then let $b_v$ be the largest number in the interval $[0,1]$ such that, for all $x>0$,
\begin{equation*}K_{\nu+1}(x)<b_\nu K_\nu(x)+(1-b_\nu) K_{\nu+2}(x).
\end{equation*} 

\begin{open}Let $\nu>-\frac{1}{2}$.  Establish lower and upper bounds for $b_\nu$ that improve on the trivial estimate $0\leq b_\nu\leq1$.
\end{open}

\section{Uniform bounds for expressions involving integrals of modified Bessel functions}\label{sec3}

In this section, we use the integral inequalities of Section \ref{sec2} and straightforward calculations to obtain uniform bounds for expressions of the type that were presented in the Introduction.  These bounds are required for technical advances in Stein's method for VG approximation \cite{gaunt vg2, gaunt vg3}. Before stating these bounds, we collect some useful inequalities for products of modified Bessel functions in the following lemma.  Part (i) is  given in the proof of Theorem 5 of \cite{gaunt ineq2}, and is a simple consequence of Theorem 4.1 of \cite{hartman}. Part (ii) is proved in Lemma 3 of \cite{gaunt ineq2}.  See also \cite{baricz} and \cite{baricz2} for a number of results and upper bounds for the product $I_{\nu}(x)K_{\nu}(x)$.

\begin{lemma}\label{IKineq}
(i) Let $\nu>\frac{1}{2}$. Then, for all $x\geq0$, 
\begin{equation}\label{bdsjbc1}0\leq xK_{\nu}(x)I_\nu(x)<\frac{1}{2}.
\end{equation}

(ii) Fix $\nu\geq-\frac{1}{2}$.  Then, for all $x\geq0$, 
\begin{equation}\label{bdsjbc}\frac{1}{2}< xK_{\nu+1}(x)I_\nu(x)\leq1.
\end{equation}
\end{lemma}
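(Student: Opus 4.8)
The plan is to handle the two parts by quite different routes. Part (ii) can be proved in a self-contained way from the Wronskian-type identity
\[ xI_\nu(x)K_{\nu+1}(x)+xI_{\nu+1}(x)K_\nu(x)=1, \qquad x>0, \]
together with the monotonicity of $I_\nu$ and $K_\nu$ in their order, whereas part (i) I would deduce from a genuine monotonicity property of the product $xI_\nu(x)K_\nu(x)$ taken from \cite{hartman}.

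For part (ii), write $h(x)=xI_\nu(x)K_{\nu+1}(x)$. The identity above gives $h(x)=1-xI_{\nu+1}(x)K_\nu(x)$, and since $I_{\nu+1}(x),K_\nu(x)>0$ for $x>0$ the subtracted term is positive, so $h(x)<1$ for $x>0$; the limiting forms (\ref{Itend0}) and (\ref{Ktend0}) give $\lim_{x\downarrow0}h(x)=1$, so $h(x)\le 1$ on $[0,\infty)$ with equality only at the origin. For the lower bound I would note that $h(x)>\tfrac12$ is, by the same identity, equivalent to $I_\nu(x)K_{\nu+1}(x)>I_{\nu+1}(x)K_\nu(x)$, and this follows from the chain
\[ I_\nu(x)K_{\nu+1}(x)>I_{\nu+1}(x)K_{\nu+1}(x)\ge I_{\nu+1}(x)K_\nu(x), \]
where the strict step uses $I_\nu(x)>I_{\nu+1}(x)$ (inequality (\ref{Imon}); the endpoint $\nu=-\tfrac12$ is covered directly by the closed forms for $I_{\pm1/2}$) and the second step uses $K_{\nu+1}(x)\ge K_\nu(x)$ for $\nu\ge-\tfrac12$ (inequality (\ref{cake})). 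Multiplying by $x>0$ yields $h(x)>\tfrac12$ for all $x>0$.

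For part (i), positivity is immediate since $I_\nu,K_\nu>0$ and $\lim_{x\downarrow0}xI_\nu(x)K_\nu(x)=0$. The content is the strict upper bound, for which I would invoke Theorem 4.1 of \cite{hartman}: the map $x\mapsto xI_\nu(x)K_\nu(x)$ is strictly increasing on $(0,\infty)$ for $\nu>\tfrac12$. Combining this with the asymptotics (\ref{Itendinfinity}) and (\ref{Ktendinfinity}), which give $\lim_{x\to\infty}xI_\nu(x)K_\nu(x)=\tfrac12$, monotonicity forces $xI_\nu(x)K_\nu(x)<\tfrac12$ at every finite $x>0$.

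I expect this monotonicity to be the main obstacle, and the reason part (i) does not reduce to the elementary argument behind part (ii). An attack using only Wronskians and Tur\'an-type inequalities falls short of the sharp constant: from part (ii) (at orders $\nu$ and $\nu-1$, the latter needing $\nu>\tfrac12$) one obtains $xI_\nu(x)K_{\nu-1}(x)<\tfrac12$ and $xI_\nu(x)K_{\nu+1}(x)<1$, and the Tur\'an inequality $K_{\nu-1}(x)K_{\nu+1}(x)>K_\nu(x)^2$ then gives only
\[ \big(xI_\nu(x)K_\nu(x)\big)^2<\big(xI_\nu(x)K_{\nu-1}(x)\big)\big(xI_\nu(x)K_{\nu+1}(x)\big)<\tfrac12, \]
i.e.\ $xI_\nu(x)K_\nu(x)<1/\sqrt2$, which is weaker than $\tfrac12$. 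The sharp bound genuinely needs the differential-equation (Riccati) analysis of the product underlying Hartman's theorem, because as $x\to\infty$ the leading terms of the logarithmic derivative $\tfrac1x+(\log I_\nu)'(x)+(\log K_\nu)'(x)$ cancel to order $x^{-1}$, so the sign of the derivative of $xI_\nu(x)K_\nu(x)$ cannot be read off from the standard recurrences alone.
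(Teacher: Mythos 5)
Your proposal is correct. Note that the paper does not actually prove this lemma: it simply cites the proof of Theorem 5 of \cite{gaunt ineq2} (via Theorem 4.1 of \cite{hartman}) for part (i) and Lemma 3 of \cite{gaunt ineq2} for part (ii). Your treatment of part (i) follows exactly the route the paper points to --- Hartman's monotonicity of $x\mapsto xI_\nu(x)K_\nu(x)$ for $\nu>\frac{1}{2}$ combined with the limits $0$ at the origin and $\frac{1}{2}$ at infinity computed from (\ref{Itend0}), (\ref{Ktend0}), (\ref{Itendinfinity}) and (\ref{Ktendinfinity}) --- so there is nothing to compare there beyond the citation. For part (ii) you supply a genuinely self-contained argument in place of the citation: the Wronskian identity $xI_\nu(x)K_{\nu+1}(x)+xI_{\nu+1}(x)K_\nu(x)=1$ immediately gives the upper bound $\leq 1$ (with the value $1$ attained only in the limit $x\downarrow0$), and reduces the lower bound to $I_\nu(x)K_{\nu+1}(x)>I_{\nu+1}(x)K_\nu(x)$, which your chain establishes from (\ref{Imon}) and (\ref{cake}) applied at order $\nu+1$; both steps are valid for $\nu\geq-\frac{1}{2}$, including the endpoint. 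This buys the reader a proof that can be checked entirely from the appendix of the present paper, at the cost of no extra length. Your closing remark --- that the Wronskian/Tur\'an route only yields $xI_\nu(x)K_\nu(x)<1/\sqrt{2}$ and that the sharp constant $\frac{1}{2}$ in part (i) genuinely requires Hartman's differential-equation analysis --- is accurate and correctly identifies why the two parts are of different depth.
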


With the aid of this lemma and the results of Section \ref{sec2} we may prove the following theorems.

\begin{theorem}\label{notfin}(i) Let $\beta\geq0$ and $\nu>-\frac{1}{2}$.  Then, for all $x\geq0$,
\begin{eqnarray}\label{propb2a12}\frac{\mathrm{e}^{-\beta x}K_{\nu+1}(x)}{x^\nu}\int_0^x \mathrm{e}^{\beta t}t^{\nu+1}I_\nu(t)\,\mathrm{d}t&<& \frac{1}{2}, \\
\label{propb2a125}\frac{\mathrm{e}^{-\beta x}K_{\nu}(x)}{x^\nu}\int_0^x \mathrm{e}^{\beta t}t^{\nu+1}I_\nu(t)\,\mathrm{d}t&<& \frac{1}{2},
\end{eqnarray}
(ii) and
\begin{eqnarray}\label{jjj1}\frac{\mathrm{e}^{-\beta x}K_{\nu+1}(x)}{x^{\nu-1}}\int_0^x \mathrm{e}^{\beta t}t^{\nu}I_\nu(t)\,\mathrm{d}t&<& \frac{\nu+1}{2\nu+1}, \\
\label{jjj2}\frac{\mathrm{e}^{-\beta x}K_{\nu}(x)}{x^{\nu-1}}\int_0^x \mathrm{e}^{\beta t}t^{\nu}I_\nu(t)\,\mathrm{d}t&<& \frac{\nu+1}{2\nu+1}.
\end{eqnarray}
(iii) Let $-1<\beta<0$ and $\nu>-\frac{1}{2}$.  Then, for all $x\geq0$,
\begin{eqnarray*}\frac{\mathrm{e}^{-\beta x}K_{\nu+1}(x)}{x^\nu}\int_0^x \mathrm{e}^{\beta t}t^{\nu+1}I_\nu(t)\,\mathrm{d}t&<& \frac{1}{2(1+\beta)}, \\
\frac{\mathrm{e}^{-\beta x}K_{\nu}(x)}{x^\nu}\int_0^x \mathrm{e}^{\beta t}t^{\nu+1}I_\nu(t)\,\mathrm{d}t&<& \frac{1}{2(1+\beta)}.
\end{eqnarray*}
(iv) Now suppose that $-1<\beta<0$ and $\nu\geq\frac{1}{2}$.  Then, for all $x\geq0$,
\begin{eqnarray*}\frac{\mathrm{e}^{-\beta x}K_{\nu+1}(x)}{x^{\nu-1}}\int_0^x \mathrm{e}^{\beta t}t^{\nu}I_\nu(t)\,\mathrm{d}t&<& \frac{\nu+1}{(2\nu+1)(1+\beta)}, \\
\frac{\mathrm{e}^{-\beta x}K_{\nu}(x)}{x^{\nu-1}}\int_0^x \mathrm{e}^{\beta t}t^{\nu}I_\nu(t)\,\mathrm{d}t&<& \frac{\nu+1}{(2\nu+1)(1+\beta)}.
\end{eqnarray*}
\end{theorem}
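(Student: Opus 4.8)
The plan is to handle all four parts by a single two-step recipe. First I replace the integral by an elementary upper bound taken from Section \ref{sec2}, chosen so that after multiplying by the prefactor the exponential and the appropriate power of $x$ cancel, collapsing each expression into a constant multiple of the product $xK_{\mu}(x)I_{\nu+1}(x)$ with $\mu\in\{\nu,\nu+1\}$. Then I bound this product by $\tfrac12$ using Lemma \ref{IKineq}(i). The only additional ingredient needed for the variants carrying $K_\nu$ rather than $K_{\nu+1}$ in the numerator is the monotonicity $K_\nu(x)\le K_{\nu+1}(x)$ of inequality (\ref{cake}), valid for $\nu>-\tfrac12$, which lets me pass from $xK_\nu(x)I_{\nu+1}(x)$ to $xK_{\nu+1}(x)I_{\nu+1}(x)$ before invoking Lemma \ref{IKineq}(i) with order $\nu+1>\tfrac12$.

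For part (i) with $\beta\ge0$, I would bound $\int_0^x \mathrm{e}^{\beta t}t^{\nu+1}I_\nu(t)\,\mathrm{d}t$ above by $\mathrm{e}^{\beta x}x^{\nu+1}I_{\nu+1}(x)$ using (\ref{propb2a}); the prefactor $\mathrm{e}^{-\beta x}K_{\nu+1}(x)x^{-\nu}$ then cancels the exponential and one power of $x$, leaving exactly $xK_{\nu+1}(x)I_{\nu+1}(x)<\tfrac12$, which gives (\ref{propb2a12}), while (\ref{propb2a125}) follows by first applying (\ref{cake}). Part (ii) is identical in spirit but uses (\ref{propb2a1}): after cancellation the expression becomes $\tfrac{2(\nu+1)}{2\nu+1}\,xK_{\nu+1}(x)I_{\nu+1}(x)<\tfrac{2(\nu+1)}{2\nu+1}\cdot\tfrac12=\tfrac{\nu+1}{2\nu+1}$, the stated constant, and the $K_\nu$ version again follows via (\ref{cake}). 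Since (\ref{propb2a1}) is strict and Lemma \ref{IKineq}(i) is strict, the overall inequalities are strict even at $\beta=0$.

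For the sign-reversed cases (iii) and (iv) with $-1<\beta<0$, I set $\gamma=-\beta\in(0,1)$ so that $\mathrm{e}^{\beta t}=\mathrm{e}^{-\gamma t}$ is now decreasing and cannot be pulled out of the integral; here the more delicate inequalities of Theorem \ref{tiger1} are required. For (iii) I would invoke (\ref{besi55}), valid for $\nu>-\tfrac12$; after the cancellation $\mathrm{e}^{-\beta x}\mathrm{e}^{-\gamma x}=1$ the expression is at most $\tfrac{1}{1-\gamma}\,xK_{\nu+1}(x)I_{\nu+1}(x)<\tfrac{1}{2(1-\gamma)}=\tfrac{1}{2(1+\beta)}$. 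For (iv) I would use instead (\ref{bes18}), which produces the factor $\tfrac{\nu+1}{(2\nu+1)(1-\gamma)}=\tfrac{\nu+1}{(2\nu+1)(1+\beta)}$ after applying Lemma \ref{IKineq}(i). In both (iii) and (iv) the $K_\nu$ variants follow from (\ref{cake}) exactly as before, and the case $x=0$ is immediate because each left-hand side tends to $0$ as $x\downarrow0$ by the limiting forms (\ref{Itend0}) and (\ref{Ktend0}).

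The substantive constraint, and the reason part (iv) is restricted to $\nu\ge\tfrac12$, is that the bound (\ref{bes18}) rests on (\ref{besi33}), whose derivation uses (\ref{gauntpa}) together with the monotonicity $I_\nu(x)<I_{\nu-1}(x)$ that holds for all $x>0$ only when $\nu\ge\tfrac12$. Consequently the single genuine obstacle in the whole argument is the absence, for $-1<\beta<0$ and $-\tfrac12<\nu<\tfrac12$, of a clean bound for $\int_0^x \mathrm{e}^{-\gamma t}t^{\nu}I_\nu(t)\,\mathrm{d}t$ of the form $C_{\nu,\gamma}\,\mathrm{e}^{-\gamma x}x^{\nu}I_{\nu+1}(x)$; this is precisely Open Problem \ref{openprob1}, and it is why part (iv) cannot currently be pushed below $\nu=\tfrac12$. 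Everything else is routine cancellation followed by one application of Lemma \ref{IKineq}(i).
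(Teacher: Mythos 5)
Your proposal is correct and follows essentially the same route as the paper's own proof: bound the integral by (\ref{propb2a}), (\ref{propb2a1}), (\ref{besi55}) or (\ref{bes18}) as appropriate, cancel the exponential and power of $x$, apply Lemma \ref{IKineq}(i) to $xK_{\nu+1}(x)I_{\nu+1}(x)$, and deduce the $K_\nu$ variants via (\ref{cake}). The only (harmless) additions are your explicit treatment of $x=0$ and the closing commentary on why part (iv) requires $\nu\geq\tfrac12$, which matches the paper's own discussion.
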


\begin{proof}(i) Suppose $\beta\geq0$ and $\nu>-\frac{1}{2}$. We have
\begin{align*}\frac{\mathrm{e}^{-\beta x}K_{\nu+1}(x)}{x^\nu}\int_0^x \mathrm{e}^{\beta t}t^{\nu+1}I_\nu(t)\,\mathrm{d}t&\leq \frac{\mathrm{e}^{-\beta x}K_{\nu+1}(x)}{x^\nu}\cdot \mathrm{e}^{\beta x}x^{\nu+1}I_{\nu+1}(x)\\
& =xK_{\nu+1}(x)I_{\nu+1}(x)<\frac{1}{2},
\end{align*} 
where we used (\ref{propb2a}) to obtain the first inequality and (\ref{bdsjbc1}) to obtain the second inequality, which proves (\ref{propb2a12}).  We obtain (\ref{propb2a125}) from (\ref{propb2a12}) by an application of inequality (\ref{cake}).

(ii) Applying inequalities (\ref{propb2a1}) and (\ref{bdsjbc1}) gives that
\begin{align*}\frac{\mathrm{e}^{-\beta x}K_{\nu+1}(x)}{x^{\nu-1}}\int_0^x \mathrm{e}^{\beta t}t^{\nu}I_\nu(t)\,\mathrm{d}t&\leq \frac{\mathrm{e}^{-\beta x}K_{\nu+1}(x)}{x^{\nu-1}}\cdot \frac{2(\nu+1)}{2\nu+1}\mathrm{e}^{\beta x}x^\nu I_{\nu+1}(x) \\
&=\frac{2(\nu+1)}{2\nu+1}xK_{\nu+1}(x)I_{\nu+1}(x)<\frac{\nu+1}{2\nu+1},
\end{align*}
which proves (\ref{jjj1}).  We deduce inequality (\ref{jjj2}) from (\ref{jjj1}) by applying inequality (\ref{cake}).

(iii) The argument is the same as for part (i), but, since now $-1<\beta<0$, we use inequality (\ref{besi55}) to bound the integral instead of inequality (\ref{propb2a}).

(iv) The proof is the same as for part (iii), except now we use (\ref{bes18}) to bound the integral, instead of (\ref{propb2a1}).
\end{proof}

\begin{theorem}(i) Let $\beta\leq0$ and $\nu>-\frac{1}{2}$.  Then, for all $x\geq0$,
\begin{eqnarray}\label{fff11}\frac{\mathrm{e}^{-\beta x}I_{\nu}(x)}{x^\nu}\int_x^\infty \mathrm{e}^{\beta t}t^{\nu+1}K_{\nu}(t)\,\mathrm{d}t&<& 1, \\
\label{fff2}\frac{\mathrm{e}^{-\beta x}I_{\nu+1}(x)}{x^\nu}\int_x^\infty \mathrm{e}^{\beta t}t^{\nu+1}K_{\nu}(t)\,\mathrm{d}t&<& \frac{1}{2},
\end{eqnarray}
(ii) and
\begin{eqnarray}\label{ddd1}\frac{\mathrm{e}^{-\beta x}I_{\nu}(x)}{x^{\nu-1}}\int_x^\infty \mathrm{e}^{\beta t}t^{\nu}K_{\nu}(t)\,\mathrm{d}t&<& 1, \\
\label{ddd2}\frac{\mathrm{e}^{-\beta x}I_{\nu+1}(x)}{x^{\nu-1}}\int_x^\infty \mathrm{e}^{\beta t}t^{\nu}K_{\nu}(t)\,\mathrm{d}t&<& \frac{1}{2}.
\end{eqnarray}
(iii) Let $0< \beta <1$ and $\nu>-\frac{1}{2}$. Then, for all $x\geq0$, 
\begin{eqnarray*}\frac{\mathrm{e}^{-\beta x}I_{\nu}(x)}{x^\nu}\int_x^\infty \mathrm{e}^{\beta t}t^{\nu+1}K_{\nu}(t)\,\mathrm{d}t&<& 1+\frac{2\sqrt{\pi}\beta\Gamma(\nu+\frac{3}{2})}{(1-\beta^2)^{\nu+\frac{3}{2}}\Gamma(\nu+1)}, \\
\frac{\mathrm{e}^{-\beta x}I_{\nu+1}(x)}{x^\nu}\int_x^\infty \mathrm{e}^{\beta t}t^{\nu+1}K_{\nu}(t)\,\mathrm{d}t&<& \frac{1}{2}+\frac{\sqrt{\pi}\beta\Gamma(\nu+\frac{3}{2})}{(1-\beta^2)^{\nu+\frac{3}{2}}\Gamma(\nu+1)},
\end{eqnarray*}
(iv) and
\begin{eqnarray}\label{ddd3}\frac{\mathrm{e}^{-\beta x}I_{\nu}(x)}{x^{\nu-1}}\int_x^\infty \mathrm{e}^{\beta t}t^{\nu}K_{\nu}(t)\,\mathrm{d}t&<& N_{\nu,\beta}, \\
\label{ddd4}\frac{\mathrm{e}^{-\beta x}I_{\nu+1}(x)}{x^{\nu-1}}\int_x^\infty \mathrm{e}^{\beta t}t^{\nu}K_{\nu}(t)\,\mathrm{d}t&<& N_{\nu,\beta},
\end{eqnarray}
where 
\begin{equation*}N_{\nu,\beta}= \begin{cases}\displaystyle \frac{1}{2(1-\beta)}, & \quad \nu\leq\tfrac{1}{2}, \\ 
\displaystyle\frac{\sqrt{\pi}\Gamma(\nu+\frac{1}{2})}{(1-\beta^2)^{\nu+\frac{1}{2}}\Gamma(\nu)}, & \quad \nu>\tfrac{1}{2}. \end{cases}
\end{equation*}
\end{theorem}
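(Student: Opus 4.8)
The plan is to reduce each of the eight expressions to a constant multiple of a product $xI_\mu(x)K_\lambda(x)$ with $\mu\in\{\nu,\nu+1\}$ and $\lambda\in\{\nu,\nu+1\}$, by first bounding the inner integral with the appropriate inequality of Section \ref{sec2} so that the powers of $x$ and the factors $\mathrm{e}^{\pm\beta x}$ cancel exactly. The resulting products are then controlled by Lemma \ref{IKineq}, by the monotonicity $I_{\nu+1}(x)<I_\nu(x)$ from (\ref{Imon}), and by the Wronskian-type identity $I_\nu(x)K_{\nu+1}(x)+I_{\nu+1}(x)K_\nu(x)=1/x$; when $\beta\neq0$, the auxiliary quantity $I_{\nu,\beta}$ entering the integral bounds is replaced by its closed estimate (\ref{doubleivb}).

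For parts (i) and (ii) ($\beta\leq0$) I would bound the integrals by Proposition \ref{propone}: inequality (\ref{fff}), applied with $-\beta\geq0$, gives $\int_x^\infty\mathrm{e}^{\beta t}t^{\nu+1}K_\nu(t)\,\mathrm{d}t\leq\mathrm{e}^{\beta x}x^{\nu+1}K_{\nu+1}(x)$, and (\ref{fff1}) gives $\int_x^\infty\mathrm{e}^{\beta t}t^{\nu}K_\nu(t)\,\mathrm{d}t<\mathrm{e}^{\beta x}x^{\nu}K_{\nu+1}(x)$. After cancellation, (\ref{fff11}) and (\ref{ddd1}) reduce to $xI_\nu(x)K_{\nu+1}(x)$, which is at most $1$ by (\ref{bdsjbc}), while (\ref{fff2}) and (\ref{ddd2}) reduce to $xI_{\nu+1}(x)K_{\nu+1}(x)<\tfrac12$ by (\ref{bdsjbc1}), since $\nu+1>\tfrac12$. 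For part (iii) ($0<\beta<1$) the identical cancellation is carried out with the upper bound of (\ref{lowerk3}) replacing (\ref{fff}); the extra factor $1+\beta I_{\nu+1,\beta}/(2^\nu\Gamma(\nu+1))$ is then estimated using the upper bound for $I_{\nu+1,\beta}$ in (\ref{doubleivb}), which reproduces exactly the stated constants.

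For part (iv) with $\nu>\tfrac12$ I would instead use the upper bound in (\ref{lowerk2}), after which (\ref{ddd3}) and (\ref{ddd4}) reduce to $\tfrac{I_{\nu,\beta}}{2^{\nu-1}\Gamma(\nu)}\,xI_\mu(x)K_\nu(x)$; both products are $<\tfrac12$ by (\ref{bdsjbc1}) (for $\mu=\nu+1$ one also invokes $I_{\nu+1}<I_\nu$), and (\ref{doubleivb}) turns $I_{\nu,\beta}/(2^\nu\Gamma(\nu))$ into $N_{\nu,\beta}$. For $\nu\leq\tfrac12$ the integral is bounded instead by (\ref{lowerk}), producing the factor $1/(1-\beta)$ and the product $xI_\mu(x)K_\nu(x)$. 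The case (\ref{ddd4}) ($\mu=\nu+1$) then closes cleanly, because the Wronskian identity gives $xI_{\nu+1}(x)K_\nu(x)=1-xI_\nu(x)K_{\nu+1}(x)<\tfrac12$ by (\ref{bdsjbc}), so the bound is $\tfrac{1}{2(1-\beta)}=N_{\nu,\beta}$.

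The main obstacle is (\ref{ddd3}) in the remaining regime $-\tfrac12<\nu<\tfrac12$. There the product that appears is $xI_\nu(x)K_\nu(x)$, which is \emph{not} bounded by $\tfrac12$ (for instance it exceeds $0.53$ near $x=1$ when $\nu=0$), so the product reduction used everywhere else breaks down and no cruder bound recovers the sharp constant $\tfrac{1}{2(1-\beta)}$. That constant really is sharp: writing $A_\nu(x)=\mathrm{e}^{-\beta x}x^{1-\nu}I_\nu(x)\int_x^\infty\mathrm{e}^{\beta t}t^{\nu}K_\nu(t)\,\mathrm{d}t$ for the left-hand side, the limiting forms (\ref{Itend0}) and (\ref{Ktend0}) give $A_\nu(x)\to0$ as $x\downarrow0$, while (\ref{Itendinfinity}) together with the asymptotic (\ref{zon1}) give $A_\nu(x)\to\tfrac{1}{2(1-\beta)}$ as $x\to\infty$. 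I would therefore abandon the product reduction for this case and study $A_\nu$ as a single function, proving $A_\nu(x)<\tfrac{1}{2(1-\beta)}$ for all $x>0$ by a monotonicity argument of the type used for $v(x)$ in Theorem \ref{tiger}(ii), showing that $A_\nu$ rises monotonically to its supremum at infinity. The hard step, where I expect the real work to lie, is controlling the sign of $A_\nu'(x)$: this requires a two-sided bound on $\int_x^\infty\mathrm{e}^{\beta t}t^\nu K_\nu(t)\,\mathrm{d}t$ (a lower bound complementing (\ref{lowerk})) that is sharp enough to offset the product $xI_\nu(x)K_\nu(x)$ overshooting $\tfrac12$ at finite $x$.
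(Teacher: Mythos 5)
Your handling of parts (i)--(iii), of (\ref{ddd4}), and of part (iv) in the regime $\nu>\frac{1}{2}$ is essentially identical to the paper's: bound the integral by (\ref{fff}), (\ref{fff1}), the upper bound of (\ref{lowerk3}), (\ref{lowerk2}) or (\ref{lowerk}) as appropriate, cancel the powers of $x$ and the exponentials, and finish with Lemma \ref{IKineq} and the estimate (\ref{doubleivb}) for $I_{\nu,\beta}$ and $I_{\nu+1,\beta}$. Your derivation of (\ref{ddd4}) for $\nu\leq\frac{1}{2}$ via the Wronskian identity, $xI_{\nu+1}(x)K_{\nu}(x)=1-xI_{\nu}(x)K_{\nu+1}(x)<\tfrac{1}{2}$ by (\ref{bdsjbc}), is a small but genuine departure: the paper instead deduces (\ref{ddd4}) from (\ref{ddd3}) using (\ref{Imon}), so your argument has the advantage of being independent of (\ref{ddd3}).

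The one point where your proposal is incomplete is (\ref{ddd3}) for $-\frac{1}{2}<\nu<\frac{1}{2}$: you reduce it to $\frac{1}{1-\beta}xI_{\nu}(x)K_{\nu}(x)$, correctly note that $xI_{\nu}(x)K_{\nu}(x)$ exceeds $\tfrac{1}{2}$ in this range (e.g.\ $I_{0}(1)K_{0}(1)\approx0.533$), and then only sketch a programme (monotonicity of your $A_{\nu}$) without executing it, so that case is not actually proved. You should be aware, though, that the paper's own proof of this case is exactly the reduction you reject: it asserts $\frac{1}{1-\beta}xI_{\nu}(x)K_{\nu}(x)<\frac{1}{2(1-\beta)}$ by appeal to (\ref{bdsjbc1}), which is stated (and true) only for $\nu>\frac{1}{2}$; the intermediate inequality is false on $(-\frac{1}{2},\frac{1}{2})$, precisely as your numerical example shows. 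So what you have found is not merely a gap in your own argument but a flaw in the published one. The conclusion (\ref{ddd3}) may well remain true there --- your asymptotics show the constant $\frac{1}{2(1-\beta)}$ is approached only as $x\rightarrow\infty$ --- but establishing it requires either the direct analysis of $A_{\nu}(x)$ you outline, or an extension of Lemma \ref{IKineq}(i) to $\nu>-\frac{1}{2}$ with an explicit constant $c_{\nu}\geq\sup_{x>0}xI_{\nu}(x)K_{\nu}(x)$, which would then force a correspondingly larger $N_{\nu,\beta}$.
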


\begin{proof} (i) Let us first prove inequality (\ref{fff11}). We have
\begin{align*}\frac{\mathrm{e}^{-\beta x}I_{\nu}(x)}{x^\nu}\int_x^\infty \mathrm{e}^{\beta t}t^{\nu+1}K_{\nu}(t)\,\mathrm{d}t&< \frac{\mathrm{e}^{-\beta x}I_{\nu}(x)}{x^\nu}\cdot \mathrm{e}^{\beta x}x^{\nu+1}K_{\nu+1}(x) \\
&=xI_\nu(x)K_{\nu+1}(x)\leq 1,
\end{align*}
where we used (\ref{fff}) to obtain the first inequality and (\ref{bdsjbc}) to obtain the second inequality.  To obtain inequality (\ref{fff2}), we apply inequality (\ref{bdsjbc1}) instead of inequality (\ref{bdsjbc}):
\begin{align*}\frac{\mathrm{e}^{-\beta x}I_{\nu+1}(x)}{x^\nu}\int_x^\infty \mathrm{e}^{\beta t}t^{\nu+1}K_{\nu}(t)\,\mathrm{d}t< xI_{\nu+1}(x)K_{\nu+1}(x)<\frac{1}{2}.
\end{align*}

(ii) Using inequalities (\ref{fff1}) and (\ref{bdsjbc}) gives that
\begin{align*}\frac{\mathrm{e}^{-\beta x}I_{\nu}(x)}{x^{\nu-1}}\int_x^\infty \mathrm{e}^{\beta t}t^{\nu}K_\nu(t)\,\mathrm{d}t< \frac{\mathrm{e}^{-\beta x}I_{\nu}(x)}{x^{\nu-1}}\cdot \mathrm{e}^{\beta x} x^\nu K_{\nu+1}(x) =xI_\nu(x)K_{\nu+1}(x)\leq1,
\end{align*}
which proves (\ref{ddd1}).  We establish (\ref{ddd2}) similarly, but use inequality (\ref{bdsjbc1}) to bound the product of modified Bessel functions instead of inequality (\ref{bdsjbc}).

(iii) As was the case for the proof of Theorem \ref{notfin}, the argument is the same as for part (i), but, since now $0<\beta<1$, we use inequality (\ref{lowerk3}) to bound the integral.  We obtain a final simplification by using the upper bound in (\ref{doubleivb}) to bound $I_{\nu+1,\beta}$.

(iv) If $0<\beta<1$ and $\nu\leq\frac{1}{2}$, then applying inequalities (\ref{lowerk}) and (\ref{bdsjbc1}) gives that
\begin{align*}\frac{\mathrm{e}^{-\beta x}I_{\nu}(x)}{x^{\nu-1}}\int_x^\infty \mathrm{e}^{\beta t}t^{\nu}K_\nu(t)\,\mathrm{d}t&\leq \frac{\mathrm{e}^{-\beta x}I_{\nu}(x)}{x^{\nu-1}}\cdot\frac{1}{1-\beta}\mathrm{e}^{\beta x}x^\nu K_\nu(x) \\
&=\frac{1}{1-\beta}xI_\nu(x)K_\nu(x)<\frac{1}{2(1-\beta)},
\end{align*}
as required.  If now $\nu>\frac{1}{2}$, then we argue as before but use (\ref{lowerk2}) instead of (\ref{lowerk}).  We obtain a final simplification by using the upper bound in (\ref{doubleivb}) to bound $I_{\nu,\beta}$.   This completes the proof of (\ref{ddd3}), and we deduce (\ref{ddd4}) by applying inequality (\ref{Imon}).
\end{proof}

\begin{remark}We could use Theorem \ref{openthm} to obtain uniform bounds on the expressions of part (iv) of Theorem \ref{notfin} that hold in a larger parameter regime.
\end{remark}

All the theorems in this section give uniform bounds for the expressions involving integrals of modified Bessel functions in the entire parameter range $\nu>-\frac{1}{2}$, $-1<\beta<1$, except for part (iv) of Theorem \ref{notfin}.  Achieving this would have a useful application to Stein's method for VG approximation.  It should be noted that a straightforward asymptotic analysis verifies that the expression is bounded for all $x\geq0$, and it remains to find an explicit upper bound in terms of $\nu$ and $\beta$.  If Open Problem \ref{openprob1} was to be solved then one could easily obtain an explicit upper bound by the arguments used in this section.  The author considers this to be the most promising approach, but working directly with the integral offers an alternative.

\begin{open}\label{openprob3}Find a constant $C_{\nu,\beta}>0$ such that, for all $x\geq0$,
\begin{equation*}\frac{\mathrm{e}^{-\beta x}K_{\nu+1}(x)}{x^{\nu-1}}\int_0^x \mathrm{e}^{\beta t}t^{\nu}I_\nu(t)\,\mathrm{d}t\leq C_{\nu,\beta}, \quad \nu>-\tfrac{1}{2}, \: -1<\beta<0. 
\end{equation*}
\end{open}

\appendix

\section{Elementary properties of modified Bessel functions}

Here we list standard properties of modified Bessel functions that are used throughout this paper.  All these formulas can be found in \cite{olver}, except for the inequalities and the integration formula (\ref{pdfk}), which can be found in \cite{gradshetyn}.

\subsection{Basic properties}
The modified Bessel functions $I_{\nu}(x)$ and $K_{\nu}(x)$ are both regular functions of $x\in\mathbb{R}$.  For positive values of $x$ the functions $I_{\nu}(x)$ and $K_{\nu}(x)$ are positive for $\nu>-1$ and all $\nu\in\mathbb{R}$, respectively.  For all $\nu\in\mathbb{R}$, $K_{-\nu}(x)=K_{\nu}(x)$.  The modified Bessel function $I_{\nu}(x)$ satisfies the identity
\begin{equation}\label{Iidentity}I_{\nu +1} (x) = I_{\nu -1} (x) - \frac{2\nu}{x} I_{\nu} (x).
\end{equation}

\subsection{Limiting forms}\label{asymsec}
\begin{align}\label{Itend0}I_{\nu} (x) &\sim \frac{1}{\Gamma(\nu +1)} \bigg(\frac{x}{2}\bigg)^{\nu}, \quad x \downarrow 0, \: \nu>-1, \\
\label{Itendinfinity}I_{\nu} (x) &\sim \frac{\mathrm{e}^{x}}{\sqrt{2\pi x}}, \quad x \rightarrow\infty, \: \nu\in\mathbb{R}, \\
\label{Ktend0}K_{\nu} (x) &\sim  2^{\nu -1} \Gamma (\nu) x^{-\nu},  \quad x\downarrow0, \: \nu>0, \\
\label{Ktendinfinity} K_{\nu} (x) &\sim \sqrt{\frac{\pi}{2x}} \mathrm{e}^{-x}, \quad x \rightarrow \infty, \: \nu\in\mathbb{R}, 
\end{align}
\begin{align}
\label{struve0}\mathbf{L}_{\nu}(x)&\sim \frac{2}{\sqrt{\pi}\Gamma(\nu+\frac{3}{2})}\bigg(\frac{x}{2}\bigg)^{\nu+1}, \quad x \downarrow 0, \: \nu>-\tfrac{1}{2}, \\
\label{struveinfinity}\mathbf{L}_{\nu}(x)&\sim \frac{\mathrm{e}^{x}}{\sqrt{2\pi x}}, \quad x \rightarrow\infty, \: \nu\in\mathbb{R}.
\end{align}

\subsection{Inequalities}
Let $x > 0$. Then the following inequalities hold
\begin{align}\label{Imon}I_{\nu} (x) < I_{\nu - 1} (x), \quad \nu \geq \tfrac{1}{2},\\
\label{Kmoni}K_{\nu} (x) < K_{\nu - 1} (x), \quad \nu < \tfrac{1}{2},\\
\label{cake}K_{\nu} (x) \geq K_{\nu - 1} (x), \quad \nu \geq \tfrac{1}{2}.  
\end{align}
We have equality in (\ref{cake}) if and only if $\nu=\frac{1}{2}$.  The inequalities for $K_{\nu}(x)$ can be found in \cite{ifantis}, whilst the inequality for $I_{\nu}(x)$ can be found in \cite{jones} and \cite{nasell}, which extends a result of \cite{soni}.  A survey of related inequalities for modified Bessel functions is given by  \cite{baricz2}, and lower and upper bounds for the ratios $\frac{I_{\nu}(x)}{I_{\nu-1}(x)}$ and $\frac{K_{\nu}(x)}{K_{\nu-1}(x)}$, which improve on inequalities (\ref{Imon}) -- (\ref{cake}), are also given in \cite{ifantis} and \cite{segura}.

\subsection{Differentiation}
\begin{align}\label{diffone}\frac{\mathrm{d}}{\mathrm{d}x} (x^{\nu} I_{\nu} (x) ) &= x^{\nu} I_{\nu -1} (x), \\
\label{diffKi}\frac{\mathrm{d}}{\mathrm{d}x} (x^{\nu} K_{\nu} (x) ) &= -x^{\nu} K_{\nu -1} (x). 
\end{align}

\subsection{Integration}
\begin{equation} \label{pdfk} \int_{-\infty}^{\infty}\mathrm{e}^{\beta t} |t|^{\nu} K_{\nu}(|t|)\,\mathrm{d}t =\frac{\sqrt{\pi}\Gamma(\nu+\frac{1}{2})2^{\nu}}{(1-\beta^2)^{\nu+\frac{1}{2}}}, \quad \nu>-\tfrac{1}{2}, \: -1<\beta <1.
\end{equation}

\subsection*{Acknowledgements}
The author is supported by a Dame Kathleen Ollerenshaw Research Fellowship.  The author would like to thank \'{A}rp\'{a}d Baricz and Tibor Pog\'any for stimulating discussions related to this work.  The author would like to thank the reviewer for their comments and suggestions which led to an improved paper.

\footnotesize


\begin{thebibliography}{99}
\addcontentsline{toc}{section}{References}

\bibitem{aaps17} Arras, B., Azmoodeh, E., Poly, G. and Swan, Y. A bound on the 2-Wasserstein distance between linear combinations of independent random variables. arXiv:1704:01376, 2017.

\bibitem{azmooden} Azmooden, E., Peccati, G. and Poly, G.  Convergence towards linear combinations of chi-squared random variables: a Malliavin-based approach.  \emph{S\'{e}minaire de Probabilit\'{e}s} XLVII (special volume in memory of Marc Yor) (2015), pp. 339--367.

\bibitem{bt17} Bai, S. and Taqqu, M.  Behavior of the generalized Rosenblatt process at extreme critical exponent values. \emph{Ann. Probab.} $\mathbf{45}$ (2017), pp. 1278--1324.

\bibitem{bhj92} Barbour, A. D., Holst, L. and Janson, S. \emph{Poisson Approximation}. Oxford University Press, Oxford, 1992.

\bibitem{baricz} Baricz, \'{A}.  On a product of modified Bessel functions.  \emph{Proc. Amer. Math. Soc.} $\mathbf{137}$ (2009), pp. 189--193.

\bibitem{baricz24} Baricz, \'{A}.  Bounds for modified Bessel functions of the first and second kinds. \emph{Proc. Edinb. Math. Soc.} $\mathbf{53}$ (2010), pp. 575--599.

\bibitem{baricz2} Baricz, \'{A}., Jankov Ma\v{s}irevi\'{c}, D. J., Ponnusamy, S. and Singh, S.  Bounds for the product of modified Bessel functions.  \emph{Aequat. Math.} $\mathbf{90}$ (2016), pp. 859--870.

\bibitem{baricz3} Baricz, \'{A} and Sun, Y.  Bounds for the generalized Marcum $Q$-function. \emph{Appl. Math. Comput.} $\mathbf{217}$ (2010), pp. 2238--2250.

\bibitem{bla} Blaisdell, B. A measure of the similarity of sets of sequences not requiring sequence alignment. \emph{Proc. Natl. Acad. Sci. USA} $\mathbf{83}$ (1986), pp. 5155--5159.

\bibitem{chen} Chen, L. H. Y., Goldstein, L. and Shao, Q.--M.  \emph{Normal Approximation by Stein's Method.} Springer, 2011.

\bibitem{dgv15} D\"{o}bler, C, Gaunt, R. E. and Vollmer, S. J.  An iterative technique for bounding derivatives of solutions of Stein equations.  \emph{Electron. J. Probab.} $\mathbf{22}$ no. 96 (2017), pp. 1--39.

\bibitem{eichelsbacher} Eichelsbacher, P. and Th\"{a}le, C.  Malliavin-Stein method for Variance-Gamma approximation on Wiener space.  	\emph{Electron. J. Probab.} $\mathbf{20}$ no. 123 (2015), pp. 1--28.

\bibitem{fg15} Fulman, J. and Goldstein, L. Stein's method and the rank distribution of random matrices over finite fields. \emph{Ann. Probab.} $\mathbf{43}$ (2015), pp. 1274--1314.


\bibitem{gaunt vg} Gaunt, R. E.  Variance-Gamma approximation via Stein's method.  \emph{Electron. J. Probab.} $\mathbf{19}$ no. 38 (2014), pp. 1--33.

\bibitem{gaunt ineq1} Gaunt, R. E.  Inequalities for modified Bessel functions and their integrals.  \emph{J. Math. Anal. Appl.} $\mathbf{420}$ (2014), pp. 373--386.

\bibitem{gaunt ineq2} Gaunt, R. E.  Uniform bounds for expressions involving modified Bessel functions.  \emph{Math. Inequal. Appl.} $\mathbf{19}$ (2016), pp. 1003--1012.


\bibitem{gaunt vg2} Gaunt, R. E. Wasserstein and Kolmogorov error bounds for variance-gamma approximation via Stein's method I. arXiv:1711:07379, 2017.

\bibitem{gaunt vg3} Gaunt, R. E. Wasserstein and Kolmogorov error bounds for variance-gamma approximation via Stein's method II. In preparation, 2018+.

\bibitem{gradshetyn} Gradshetyn, I. S. and Ryzhik, I. M.  \emph{Table of Integrals, Series and Products.}  $7$th ed.  Academic Press, 2007.

\bibitem{harper} Harper, A. J. Two new proofs of the Erd\"{o}s--Kac theorem, with bound on the rate of convergence, by Stein's method for distributional approximations. \emph{Math. Proc. Camb. Phil. Soc.} $\mathbf{147}$ (2009), pp 95--114.

\bibitem{hartman} Hartman, P. On the products of solutions of second order disconjugate differential equations and the
Whittaker differential equation. \emph{SIAM J. Math. Anal.} $\mathbf{8}$ (1977), pp. 558--571.

\bibitem{ifantis} Ifantis, E. K. and Siafarikas, P. D.  Inequalities involving Bessel and modified Bessel functions. \emph{J. Math. Anal. Appl.} $\mathbf{147}$ (1990), pp. 214--227.

\bibitem{jones} Jones, A. L.  An extension of an inequality involving modified Bessel functions.  \emph{J. Math. Phys. Camb.} $\mathbf{47}$ (1968), pp. 220--221.

\bibitem{kkp01} Kotz, S., Kozubowski, T. J. and Podg\'{o}rski, K. \emph{The Laplace Distribution and Generalizations: A Revisit with New Applications.} Springer, 2001.

\bibitem{lippert} Lippert, R. A., Huang, H. and Waterman, M. S.  Distributional regimes for the number of $k$-word
matches between two random sequences.  \emph{P. Natl. Acad. Sci. USA}  $\mathbf{99}$ (2002), pp. 13980--13989. 

\bibitem{madan} Madan, D. B. and Seneta, E. The Variance Gamma (V.G.) Model for Share Market Returns.  \emph{J. Bus.} $\mathbf{63}$ (1990), pp. 511--524. 

\bibitem{nasell} N\r{a}sell, I.  Inequalities for Modified Bessel Functions.  \emph{Math. Comput.} $\mathbf{28}$ (1974), pp. 253--256. 

\bibitem{nasell2} N\r{a}sell, I. Rational bounds for ratios of modified Bessel functions. \emph{SIAM J. Math. Anal.} $\mathbf{9}$ (1978), pp. 1--11.

\bibitem{np12} Nourdin, I. and Peccati, G. Normal approximations with Malliavin calculus: from Stein's method to universality. Vol. 192. Cambridge University Press, 2012.

\bibitem{olver} Olver, F. W. J., Lozier, D. W., Boisvert, R. F. and Clark, C. W.  \emph{NIST Handbook of Mathematical Functions.} Cambridge University Press, 2010.

\bibitem{pike} Pike, J. and Ren, H. Stein's method and the Laplace distribution. \emph{ALEA Lat. Am. J. Probab. Math. Stat.} $\mathbf{11}$ (2014), pp. 571--587.

\bibitem{ross} Ross, N. Fundamentals of Stein's method.  \emph{Probab. Surv.} $\mathbf{8}$ (2011), pp. 210--293.

\bibitem{segura} Segura, J.  Bounds for ratios of modified Bessel functions and associated Tur\'{a}n-type inequalities.  \emph{J. Math. Anal. Appl.} $\mathbf{374}$ (2011), pp. 516--528.

\bibitem{soni} Soni, R. P.  On an inequality for modified Bessel functions. \emph{J. Math. Phys. Camb.} $\mathbf{44}$ (1965), pp. 406--407.

\bibitem{stein} Stein, C.  A bound for the error in the normal approximation to the the distribution of a sum of dependent random variables.  In \emph{Proc. Sixth Berkeley Symp. Math. Statis. Prob.} (1972), vol. 2, Univ. California Press, Berkeley, pp. 583--602.

\end{thebibliography}
\end{document}